\DeclareMathOperator{\Trace}{tr}
\title{Cut Topology Optimization for Linear Elasticity with Coupling to Parametric Nondesign Domain Regions}
\author{Erik Burman, Daniel Elfverson, Peter Hansbo,\\
Mats G. Larson and Karl Larsson
}
\date{}
\begin{document}

\maketitle

\begin{abstract} We develop a density based topology optimization method for linear elasticity based on the cut finite element method. More precisely, the design domain is discretized using cut finite elements which allow complicated geometry to be represented on a structured fixed background mesh. The geometry of the design domain is allowed to cut through the background mesh in an arbitrary way and certain stabilization terms are added in the vicinity of the cut boundary, which guarantee stability of the method. Furthermore, in addition to standard Dirichlet and Neumann conditions we consider interface conditions enabling coupling of the design domain to parts of the structure for which the design is already given. These given parts of the structure, called the nondesign domain regions, typically represents parts of the geometry provided by the designer. The nondesign domain regions may be discretized independently from the design domains using for example parametric meshed finite elements or isogeometric analysis. The interface and Dirichlet conditions are based on Nitsche's method and are stable for the full range of density parameters. In particular we obtain a traction-free Neumann condition in the limit when the density tends to zero.
\end{abstract}


\section{Introduction}

Topology optimization can be a powerful tool for engineers in their quest for designing components that are light, strong and durable. Most topology optimization procedures are very general in nature and give few restrictions on the final design. One way for the designer to incorporate preferred geometric features in the final design is by specifying the geometry of the part of the component that is to be optimized, i.e., the so-called \emph{design domain}, and by specifying the geometry of the parts of the component that are already known, i.e., the so-called \emph{nondesign domain regions}. Many topology optimization procedures are based on the natural idea of seeking an optimal \emph{density} or \emph{material distribution} within the design domain. For efficiency, such procedures commonly utilize structured computational grids.

\paragraph{Our Contribution.}
In this paper we develop a flexible topology optimization method for linear elasticity, which supports design domain and nondesign domain regions with complex geometries. The method is based on the combination of a well established density based topology optimization approach on structured grids and the cut finite element method (CutFEM) for solving the linear elasticity problem. CutFEM is a fictitious domain method that allows us to apply the optimization algorithm without meshing the geometry of the design domain. In particular, it facilitates the use of a structured grid for computations, regardless of the geometry of the design domain.
Each subdomain, i.e., design domain or nondesign domain region, is equipped with its own mesh and finite element space, which can be independently chosen. These finite element spaces are weakly coupled using a weighted Nitsche's method, devised such that the interface condition transforms into a traction-free boundary condition wherever the material density on either side of the interface approaches zero.
The key features of this cut topology optimization method are:
\begin{itemize}
\item In the design domain the solution is represented using CutFEM, which facilitates the use of a fixed background mesh suitable for density based topology optimization in combination with a geometrically complex design domain boundary.
\item The design domain is coupled to given nondesign domain regions that may be discretized independently to the design domain using standard unstructured or parametric finite elements, isogeometric analysis or CutFEM.
\item The couplings between the various parts of the domain are weakly imposed using a weighted Nitsche's method where the weights depend on the material density such that Dirichlet or interface conditions locally turn into a traction-free boundary condition when the density approaches zero.
This coupling is stable, independent of the local density and provides a method of optimal order.
\end{itemize}

\paragraph{Previous Work.}
Ever since the seminal work of Bends{\o}e and Kikuchi \cite{Bendsoe1988}, density based topology optimization has been a rapidly developing field. Today, this fundamental technology is applied to a broad range of applications of industrial interest, including linear and nonlinear elasticity \cite{MR3038926,MR3340167,Bruns2001,Sigmund1997}, fluid-structure interaction \cite{MR3047899,doi:10.1002/nme.2777,MR3843862}, acoustics \cite{MR3406629,MR2947650,MR3240939} and electromagnetics \cite{MR3395903,5617251,6750741}. For more in-depth reviews of the field of topology optimization and its applications, see \cite{MR2008524,MR3182450,Sigmund2013}.

The development of the cut finite element method (CutFEM) stems from the classical work of Nitsche \cite{nitsche1971} for weak imposition of Dirichlet boundary conditions, which was later used by Hansbo and Hansbo \cite{HaHa2002} to formulate a fictitious domain method. Adding to this certain consistent stabilization terms, so-called \emph{ghost penalty} stabilization \cite{Burman2010}, yields CutFEM, a fictitious domain method based on a solid mathematical foundation. Regardless of how badly the boundary of the domain cuts the computational mesh, CutFEM is proven to be stable, to be of optimal order accuracy and to produce a well conditioned linear system of equations, see \cite{Burman2012,Burman2015,HaLaLa18}.

In shape and topology optimization CutFEM has previously been applied to problems where the geometry is represented using a level-set that is updated either using an optimization approach or an evolution equation, see \cite{Burman2017,Burman2018,MR3646361,BerWadBer2018}. The present work is however the first contribution where CutFEM is combined with a density based topology optimization approach.
To preserve the detail available in the higher-order cut finite element spaces when combined with a piecewise constant density approxmation we employ a multi-resolution strategy, such as previously explored in e.g. \cite{MR3670454,MR2878673}.

\paragraph{Outline.} In Section~2 we formulate the governing equations and the optimization model problem; in Section~3 we introduce the cut finite element method and prove stability results; 
in Section~4 we outline the topology optimization procedure; and in Section~5 we present some numerical examples.

\section{The Design Problem} \label{section:the-design-problem}
\begin{figure}\centering
\includegraphics[width=0.45\linewidth]{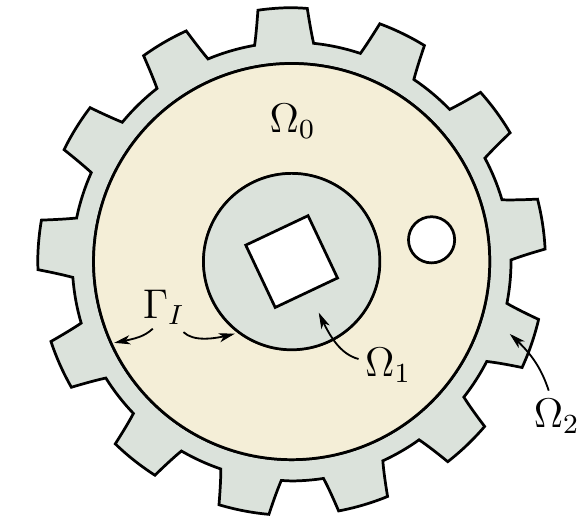}
\caption{Illustration of a cogwheel where the design domain $\Omega_0$ is to be optimized and the nondesign domain regions $\Omega_1$ and $\Omega_2$ are already supplied by the designer.
The domain of all interfaces between the subdomains $\{\Omega_i\}_{i=0}^2$ is denoted $\Gamma_I$.
}
\label{fig:domain}
\end{figure}

\paragraph{The Domain.}
We consider domains with the following structure:
\begin{itemize}
\item Let $\Omega$ be a domain in $\mathbb{R}^d$, $d=2$ or $3$, with a piecewise smooth boundary $\partial\Omega$ consisting of two disjoint parts
\begin{equation}
\partial\Omega = \Gamma_D \cup \Gamma_N
\end{equation}
where $\Gamma_D$ and $\Gamma_N$ are the Dirichlet and Neumann parts of the boundary, respectively.

\item Let $\Omega$ have the following non-overlapping decomposition into subdomains
\begin{equation} \label{eq:decomposition}
\Omega = \Omega_0 \cup \Omega_1 \cup \cdots \cup \Omega_n
\end{equation}
where $\Omega_0$ is the design domain and $\{\Omega_i\}_{i=1}^n$ are
the nondesign domain regions.

\item
The domain of all interfaces in the decomposition \eqref{eq:decomposition} is defined
\begin{equation}
\Gamma_I = \bigcup_{\substack{i,j=0\\i < j}}^n \partial\Omega_i \cap \partial\Omega_j
\end{equation}

\end{itemize}

\paragraph{Linear Elasticity.} We assume that the physics in our problem is governed by linear elasticity and we formulate this as the following interface problem on $\Omega$:
\begin{itemize}
\item Let the stress and strain tensors $\sigma$ and $\epsilon$ be defined by
\begin{align}
\sigma(v) &= 2 \mu \epsilon(v) 
+  \lambda \text{tr}(\epsilon(v))I
\\
\epsilon(v) &= \frac{1}{2}( v \otimes\nabla + \nabla \otimes v )
\end{align}
where $\mu > 0$ and $\lambda \geq 0$ are the Lam\'e parameters of the material and $a \otimes b$ denotes the tensor 
product between vectors $a\in \IR^d$ and $b\in \IR^d$ with elements $(a \otimes b)_{kl} = a_k b_l$.
We allow $\mu$ and $\lambda$ to vary over $\Omega$ and assume that $\lambda/\mu \lesssim 1$, which means that the material does not approach incompressibility.

\item The interface problem reads: Find the displacement field
$u:\Omega \rightarrow \IR^d$ such that
\begin{alignat}{2}\label{eq:standard-a}
-\sigma (u) \cdot \nabla &= f \qquad 
&& \text{in $\Omega\setminus \Gamma_I$}
\\ \label{eq:standard-c}
u &= 0 \qquad && \text{on $\Gamma_D$}
\\ \label{eq:standard-b}
\sigma(u) \cdot n &= g_N \qquad && 
\text{on $\Gamma_N$}
\\ \label{eq:standard-d}
[u] &= 0 &&  \text{on $\Gamma_I$}
\\ \label{eq:standard-e}
[\sigma(u)\cdot n] &= 0 &&  \text{on $\Gamma_I$}
\end{alignat}
with given data $\mu,\lambda : \Omega \rightarrow \IR$, $f : \Omega \rightarrow \IR^d$ and $g_N : \Gamma_N \rightarrow \IR^d$. Here $[\cdot]$ denotes the jump over the interface which we will define more explicitly in Section~\ref{section:method}.

\item In variational form this problem reads: Find $u \in V = \{ v \in H^1(\Omega) : v |_{\Gamma_D} = 0\}$ 
such that 
\begin{equation}\label{eq:weak}
a(u,v) = l(v)\qquad \forall v \in V
\end{equation}
where the forms are defined by
\begin{equation}\label{eq:forms}
a(v,w) = (\sigma(v),\epsilon(w))_\Omega \, , \qquad l(v) = (f,v)_\Omega + (g_N,v)_{\Gamma_N}
\end{equation}
with $(c,d)_\omega = \int_\omega c \cdot d\,$ denoting the standard $L^2(\omega)$ inner product, which induce the $L^2(\omega)$ norm $\| c \|_\omega^2 = (c,c)_\omega$.

Assuming there exists some lower bound $0 < \mu_\mathrm{min} \leq \mu$ on $\Omega$, and that $f,g_N$ are sufficiently regular it follows from the Lax--Milgram lemma that \eqref{eq:weak} has a unique solution 
$u \in V$.
\end{itemize}

\paragraph{The Optimization Problem.} We follow the standard approach of using a density function for representing where in the design domain $\Omega_0$ we have material. Our optimization problem is defined as follows:
\begin{itemize}
\item Let $\chi:\Omega\to[0,1]$ be density function specifying where in $\Omega$ we have material. In the nondesign domain regions $\Omega\setminus\Omega_0$ we have $\chi=1$ whereas in the design domain $\Omega_0$ the density function $\chi$ will be determined through the optimization procedure.
We define the density spaces
\begin{align}
W_0 &= \left\{ \chi : \overline{\Omega}_0 \to [\chi_\mathrm{min},1] \, : \, \chi \in L^\infty(\overline{\Omega}_0) \right\}
\\
W_i &= \left\{ \chi : \overline{\Omega}_i \to 1 \right\} \quad \text{for $i \geq 1$}
\\
W &=  W_0 \oplus W_1 \oplus \dots \oplus W_n
\end{align}
where $\chi_\mathrm{min}$ is some very small constant.

\item We define the data in the interface problem \eqref{eq:weak} such that it scales with $\chi$, i.e.,
\begin{equation} \label{eq:data-scaling}
\mu = \chi\widehat{\mu} \,, \qquad
\lambda = \chi\widehat{\lambda} \,, \qquad
f = \chi\widehat{f} \,, \qquad
g_N = \chi\widehat{g}_N
\end{equation}
where the hat versions of these functions are the actual data supplied in the set-up of the optimization problem. As both Lamé-parameters scales with $\chi$ it clearly holds that $\widehat{\mu} > 0$, $\widehat{\lambda} \geq 0$ and $\widehat{\lambda}/\widehat{\mu}\lesssim 1$.

\item Let $\Delta_\mathrm{vol}$ be the target proportion of material volume in the final design within $\Omega_0$ corresponding to a target final design volume of $\Delta_\mathrm{vol} | \Omega_0 |$.
This imposes the following volume constraint on $\chi$
\begin{equation}\label{eq:vol-constraint}
\int_{\Omega_0} \chi = \Delta_\mathrm{vol} |\Omega_0|
\end{equation}

\item Provided with some objective functional $J(\chi)=J_\chi(u_\chi)$, where $u_\chi$ is the solution to \eqref{eq:weak} given $\chi$,
we pose our optimization problem as seeking the minimizing density
\begin{equation} \label{eq:min}
  \chi = \arg \left(\min_{\chi\in W} J_\chi(u) \right)
\end{equation}
under the constraints that $u$ solves \eqref{eq:weak} and that $\chi$ satisfies \eqref{eq:vol-constraint}.

\item
The optimization problem \eqref{eq:min} can also be expressed using a Lagrangian formulation
where we seek a critical point $(\chi,u,p,\eta)$ to
\begin{equation} \label{eq:lagrangian}
  \mcL(\chi,u,p,\eta) = J_\chi(u) - a_\chi(u,p) + l_\chi(p) - \eta\left(\int_{\Omega_0} \chi -\Delta_\mathrm{vol}|\Omega_0|\right) 
\end{equation}
where $p\in V$ and $\eta \in \IR$ are Lagrange penalty parameters used to enforce the constraints on $u$ and $\chi$.

Here we use the subscript $\chi$ notation to emphasize that a form depend on $\chi$, which in the case of $a$ and $l$ is due to the given data being scaled by $\chi$, c.f. \eqref{eq:data-scaling}.
The critical point can be determined by solving the problem: Find $\chi\in W$, $u\in V$, $p\in V$ and $\eta\in\mathbb{R}$ such
that
\begin{align}
  \partial_\eta\mcL &= 0
  \label{eq:dLdlambda}
  \\
  \langle\partial_p\mcL,\delta p\rangle &= 0\qquad\forall \delta p \in V
  \label{eq:dLdq}
  \\
  \langle\partial_u\mcL,\delta u\rangle &= 0\qquad\forall \delta u \in V 
  \label{eq:dLdv}
  \\
  \langle\partial_\chi\mcL,\delta \chi\rangle &= 0\qquad\forall \delta \chi \in W
  \label{eq:dLdchi}
\end{align}
where $\langle\partial_p\mcL(\chi,u,p,\eta),\delta p\rangle$ denotes the partial derivative of $\mcL$ with respect to $p$ in the direction of $\delta p$, cf. \cite{AJT04}.
The first equation \eqref{eq:dLdlambda} gives the volume constraint \eqref{eq:vol-constraint}.
From \eqref{eq:dLdq} we get the primal problem: Find $u\in V$ such that
\begin{equation} \label{eq:primal-prob}
a_\chi(u,\delta p) = l_\chi(\delta p) \qquad\forall \delta p \in V
\end{equation}
so at the critical point $u=u_\chi$, i.e., with $\chi$ given $u$ is the solution to \eqref{eq:weak}.
Analogously, from \eqref{eq:dLdv} we get the dual problem: Find $p\in V$ such that
\begin{equation} \label{eq:dual-prob}
  a_\chi(\delta u,p) = \langle\partial_u J_\chi , \delta u \rangle \qquad \forall \delta u\in V
\end{equation}

\item To solve \eqref{eq:min} we will use a steepest descent type algorithm and thus, we must be able to compute the derivative of $J(\chi) = J_\chi(u_\chi)$ with respect to $\chi$, i.e.,
$\langle d_\chi J,\delta\chi \rangle$,
where we use the notation $d_\chi$ to emphasize that this is the total derivative.
For $\chi$ satisfying the volume constraint \eqref{eq:vol-constraint} the total derivative can be expressed as the following, more easily evaluated, partial derivative of the Lagrangian
\begin{equation} \label{eq:total-to-partial}
\langle d_\chi J ,\delta\chi \rangle \bigr|_\chi
=
\langle \partial_\chi \mcL,\delta\chi \rangle \bigr|_{(\chi,u_\chi,p_\chi,\eta_\chi)}
\end{equation}
where $u_\chi$ and $p_\chi$ are the solutions to the primal and dual problems and the value of $\eta_\chi$ is
chosen in such a way that the partial derivative on the right hand side is tangent to the space of density functions satisfying the volume constraint \eqref{eq:vol-constraint}.
In Section~\ref{section:top-opt} below we outline such a steepest descent type algorithm.

\item 
In the present work we as our objective functional choose compliance, i.e.,
$J_\chi(u)=l_\chi(u)$,
whereby the dual problem \eqref{eq:dual-prob} becomes
\begin{equation}
  a_\chi(\delta u,p) = l_\chi( \delta u ) \quad \forall \delta u\in V
\end{equation}
Due to the symmetry of $a_\chi$ the primal and dual problems in this case are the same and thus the critical point will satisfy $u=p=u_\chi$.
The optimization problem \eqref{eq:min} can thereby be expressed as seeking the critical point $(\chi,u,\eta)$ characterized by
\begin{equation} \label{eq:compliance-lagrangian}
  \mcL(\chi,u,\eta) = \underbrace{-2\left( \frac{1}{2}a_\chi(u,u) - l_\chi(u)\right)}_{\Pi_\chi(u)} - \eta
\biggl(
\underbrace{
\int_{\Omega_0} \chi - \Delta_\mathrm{vol}|\Omega_0|
}_{\Lambda_\chi}
\biggr)
\end{equation}
where we recognize $\Pi_\chi(u)$ as a Ritz functional
and we denote the volume constraint functional by $\Lambda_\chi$.

\end{itemize}

\section{The Cut Finite Element Method}

\subsection{The Mesh and Finite Element Spaces}

\paragraph{Cut Mesh on Design Domain.} On the design domain $\Omega_0$ we will employ cut finite elements that allow the design domain to arbitrarily intersect the mesh. We here define our cut mesh and cut finite element space:

\begin{itemize}
\item Let $\widetilde{\Omega}_0 \subset \mathbb{R}^d$ be a polygonal domain such that 
$\Omega_0 \subset \widetilde{\Omega}_0$ and let 
$\{\widetilde{\mcT}_{0,h} \,, \ h \in (0,h_{0}]\}$ be a family of quasiuniform 
partitions with mesh parameter $h$ of $\widetilde{\Omega}_0$ into 
shape regular elements $T$. We denote $\widetilde{\mcT}_{0,h}$ as the background mesh of $\Omega_0$.

\item We define the active mesh
\begin{equation} \label{eq:active-mesh}
\mcT_{0,h} = \{T \in \widetilde{\mcT}_{0,h} \, : \, T \cap 
\Omega_0 \neq \emptyset \} 
\end{equation}
consisting of all elements in $\widetilde{\mcT}_{0,h}$ with a non-zero intersection with $\Omega_0$.

\item Let $V_{0,h}$ be a space of $\IR^d$ 
valued continuous piecewise polynomials or tensor product polynomials of order $p$ defined on the active mesh $\mcT_{0,h}$. In particular, for our numerical examples, we use tensor product B-splines of maximum regularity with polynomial order $p=2$, which yields a finite element space $V_{0,h}|_{\Omega_0} \subset C^1(\Omega_0)$.
\end{itemize}

\paragraph{Discrete Density on Design Domain.}
The discretization of the density on the design domain $\Omega_0$ will be piecewise constant albeit on a refined mesh compared to \eqref{eq:active-mesh}. We define our discrete density space as follows:
\begin{itemize}
\item Let $\widetilde{\mcT}_{0,h/2^k}$ be the refinement of the background mesh $\widetilde{\mcT}_{0,h}$ constructed by uniformly splitting each element into $2^d$ elements $k$ times. We define the active $k$-refined mesh
\begin{equation} \label{eq:k-ref-mesh}
\mcT_{0,h/2^k} = \{T \in \widetilde{\mcT}_{i,h/2^k} \, : \, T \cap 
\Omega_0 \neq \emptyset \} 
\end{equation}

\item Let $W_{0,h,k}$ be the space of piecewise constant scalar functions on the active $k$-refined mesh $\mcT_{0,h/2^k}$. The discrete density space on $\Omega_0$ is given by the restriction $W_{0,h,k}|_{\Omega_0}$ and clearly $W_{0,h,k}|_{\Omega_0} \subset W_0$.
\end{itemize}

\paragraph{Parametric Meshes in Nondesign Domain Regions.}
In each nondesign domain region $\Omega_i$, $i=1,\dots,n$, we construct a mesh fitted to $\Omega_i$ via a parametric mapping $F_i$ from a reference domain $\widehat{\Omega}_i$ as follows:

\begin{itemize}
\item
Let $\widehat{\Omega}_i \subset \IR^d$ be a polygonal domain 
associated with a diffeomorphism $F_i^{-1} : \Omega_i \to \widehat{\Omega}_i$, i.e.,
the bijective mapping $F_i : \widehat{\Omega}_i \to \Omega_i$ is a differentiable function.

\item
Let $\{\widehat{\mcT}_{i,h} \,, \ h \in (0,h_{0}]\}$ be a family of quasiuniform 
partitions with mesh parameter $h$ of $\widehat{\Omega}_i$ into 
shape regular elements $T$.
%
On $\widehat{\mcT}_{i,h}$ we define $\widehat{V}_{i,h}$ to be a space of $\IR^d$ 
valued continuous piecewise polynomials or tensor product polynomials of order $p$.

\item
In the physical domain $\Omega_i$ we now define our finite element space as
\begin{equation}
V_{i,h} = \{ v \in H^1(\Omega_i) \, : \, v \circ F_i^{-1} \in \widehat{V}_{i,h}  \}
\end{equation}

\end{itemize}

\paragraph{The Complete Finite Element Space.}

The finite element space on the full domain $\Omega$ is defined as
  \begin{equation}
  V_h = \bigoplus_{i=0}^n V_{i,h}
  \end{equation}

\begin{rem} The use of a cut mesh on the design domain $\Omega_0$ and
(body-fitted) parametric meshes for the nondesign domain regions $\Omega\setminus\Omega_0$ is only for pedagogical reasons and not due to any limitation in the method. Any subdomain can be equipped with either a cut mesh or a parametric mesh, and actually the meshes can be concurrently cut and parametric, see \cite{JonLarLar17}.
\end{rem}

\subsection{The Method} \label{section:method}

\paragraph{Jump and Average Operators.}
On $\partial\Omega_i \cap \Gamma_D$ and on $\partial\Omega_i \cap \partial\Omega_j \in \Gamma_I$, $i < j$, we define the following jump, average and weighted average operators
\begin{alignat}{4}
[w] &= w_i \quad &\text{on $\Gamma_D$}&
\,,\qquad
& [w] &= w_i - w_j \quad&\text{on $\Gamma_I$}&
\\
\langle w \rangle &= w_i \quad&\text{on $\Gamma_D$}&
\,,\qquad
& \langle w \rangle &= \frac{w_i+w_j}{2} \quad&\text{on $\Gamma_I$}&
\\ \label{eq:weighted1}
\{ w \} &= w_i \quad&\text{on $\Gamma_D$}&
\,,\qquad\quad
& \{w\} &= \frac{\mu_j}{\mu_i+\mu_j} w_i + \frac{\mu_i}{\mu_i+\mu_j} w_j \quad&\text{on $\Gamma_I$}&
\end{alignat}
We define the normal on $\Gamma_I$ as $n=n_i=-n_j$ where $n_i$ and $n_j$ are the outward pointing boundary normals to $\Omega_i$ and $\Omega_j$, respectively.
For consistency the second terms in above operators then have the opposite sign on fluxes, for example
\begin{align}
\{\sigma\}\cdot n &= \frac{\mu_j}{\mu_i+\mu_j} \sigma_i\cdot n_i - \frac{\mu_i}{\mu_i+\mu_j} \sigma_j \cdot n_j = \{\sigma\cdot n\}
\end{align}

\paragraph{The Finite Element Method.}

Find 
$u_h \in V_h$ such that
\begin{equation}\label{eq:fem}
A_h(u_h ,v) = l_h(v) 
\qquad 
\forall v \in V_h
\end{equation}
where the forms are given by
\begin{align} \label{eq:Ah}
A_h(v,w) &= a_h(v,w) + s_h(v,w) + \beta b_h(v,w) + c_h(v,w)
\\ \label{eq:ah}
a_h(v,w) &= (\sigma(v),\epsilon(w))_{\Omega\setminus\Gamma_I}
\\ \label{eq:bh}
b_h(v,w) &= ( \{h^{-1} 2\mu\}[v],[w])_{\Gamma_I \cup \Gamma_D} + (\{h^{-1}\lambda\} [v\cdot n],[w \cdot n])_{\Gamma_I \cup \Gamma_D}
\\ \label{eq:consistency-term}
c_h(v,w) &= -(\{\sigma(v)\cdot n\}, [w])_{\Gamma_I \cup \Gamma_D} -([v],\{\sigma(w)\cdot n\})_{\Gamma_I \cup \Gamma_D}
\\ \label{eq:lh}
l_h(v) &= (f,v)_{\Omega} + (g_N,v)_{\Gamma_N}
\end{align}
where $\beta$ is a positive parameter and $s_h$ is a stabilization form which we outline below.

\begin{rem}
By keeping the mesh size and material parameters inside the averages we conveniently allow for subdomain wise choices of these parameters without cluttering the presentation.
\end{rem}

\begin{rem}
While the notation for the integrals used above is brief, it is convenient to pose them more explicitly when implementing the method. For example, the bulk integral can be stated
\begin{equation}
(\sigma(v),\epsilon(w))_{\Omega\setminus\Gamma_I}
= \sum_{i=0}^n (\sigma_i(v_i),\epsilon(w_i))_{\Omega_i}
\end{equation}
and as an example interface/boundary term we take
\begin{equation}
([v],[w])_{\Gamma_I \cup \Gamma_D} = ([v],[w])_{\Gamma_I} + ([v],[w])_{\Gamma_D}
\end{equation}
where the two integrals on the right can be written
\begin{align}
([v],[w])_{\Gamma_I} &= \sum_{i=0}^n \sum_{j=i+1}^n ([v],[w])_{\partial\Omega_i\cap\partial\Omega_j}
\\
([v],[w])_{\Gamma_D} &= \sum_{i=0}^n ([v],[w])_{\partial\Omega_i\cap\Gamma_D}
= \sum_{i=0}^n (v_i,w_i)_{\partial\Omega_i\cap\Gamma_D}
\end{align}
\end{rem}

\paragraph{The Stabilization Form.} The stabilization form
$s_h(v,w) = \sum_{i=0}^n s_{i,h}(v_i,w_i)$
must satisfy the following abstract properties on each subdomain $\Omega_i$, $i=0,\dots,n$:
\begin{itemize}
\item The form is consistent, i.e., it holds
\begin{equation}
s_{i,h}(v_i,v_i) =0 \,, \quad \forall v_i\in H^{p+1}(\Omega_i)
\end{equation}

\item The form satisfies the estimate
\begin{equation}
s_{i,h}(\pi_i v_i,\pi_i v_i) \lesssim h_i^{2p}\| v_i \|_{H^{p+1}(\Omega_{\mcT_i})}^2 \,, \quad \forall v_i\in H^{p+1}(\Omega_i)
\end{equation}
where $\pi_i:H^{p+1}(\Omega_{\mcT_i}) \to V_{i,h}$ is a suitably defined interpolant.

\item The following inverse inequality holds
\begin{equation} \label{eq:inverse-subdomain}
h_i (\sigma_i(v_i),\epsilon(v_i))_{\partial\Omega_i}
\lesssim
(\sigma_i(v_i),\epsilon(v_i))_{\Omega_i} + s_{i,h}(v_i,v_i)
 \,, \quad \forall v_i\in V_{i,h}
 \end{equation}
\end{itemize}
In the first two properties we assume an extension of $v_i \in H^{p+1}(\Omega_i) \to H^{p+1}(\Omega_{\mcT_i})$.

\begin{rem}[Choice of Stabilization Form] When the finite element space $V_{i,h}$ is fitted to $\Omega_i$ the abstract properties above are satisfied by the trivial choice
\begin{equation}
s_{i,h}(v,w)=0
\end{equation}
On the other hand, when $\Omega_i$ is allowed to cut the domain of the finite element space $V_{i,h}$ we instead choose the so called ghost penalty term
\begin{equation} \label{eq:stab-form}
s_{i,h}(v,w)= \sum_{k=1}^p \gamma_{i,k} h_i^{2 k - 1} ([\partial_n^k v],[\partial_n^k w])_{\mcF_{i,h}(\partial\Omega_i)}
\end{equation}
where $\mcF_{i,h}(\partial\Omega_i)$ is the domain of all interior faces in $\mcT_{i,h}$ belonging to elements cut by $\partial\Omega_i$, $[\partial_n^k v]$ is the jump in the $k$:th derivative in the face normal direction, and $\gamma_{i,k}$ are positive parameters which scale with $\widehat{\mu}$.
This form satisfies the abstract properties, see \cite{HaLaLa18}.

If a $H^2(\Omega_i)$ finite element space, for example quadratic B-splines, is used in the cut situation, an alternate approach to adding a stabilization form $s_{i,h}$ is to simply remove basis functions with small support inside the $\Omega_i$. This approach however requires some additional consistent least-squares terms to prove coercivity, see \cite{ElfLarLar18,2018arXiv180405654E}.
\end{rem}

\paragraph{Interface and Boundary Conditions when $\boldsymbol\chi \boldsymbol\to \boldsymbol 0$.}
For the weighted average \eqref{eq:weighted1} we have the properties
\begin{equation}
\{\mu \} = \frac{2\mu_i\mu_j}{\mu_i+\mu_j} \qquad\text{and}\qquad
\{\mu w\} = \frac{\mu_i\mu_j}{\mu_i+\mu_j}(w_i + w_j) = \{\mu\} \langle w \rangle
\end{equation}
and as $\mu=\chi\hat{\mu}$ we in the limit $\chi\to 0$ have
\begin{equation}
\lim_{\chi_i\to 0} \{\mu \} =
\lim_{\chi_j\to 0} \{\mu \} =
\lim_{(\chi_i,\chi_j)\to (0,0)} \{\mu \} = 0
\end{equation}
Also $\lambda$ scales with $\chi$ and we can write $\lambda = \mu \frac{\lambda}{\mu} = \mu \frac{\widehat{\lambda}}{\widehat{\mu}}$ where $\frac{\widehat{\lambda}}{\widehat{\mu}}$ is assumed bounded.
Thus, all integrands in the penalty \eqref{eq:bh} or in the consistency \eqref{eq:consistency-term} terms can be stated on the form $\{\mu w\}=\{\mu\}\langle w \rangle$ so the integrands will locally give zero contribution to the forms $b_h$ and $c_h$ wherever $\chi\to 0$.
Hence, the interface or Dirichlet conditions in parts of $\Gamma_I \cap \Gamma_D$ where $\chi\to 0$ on either side turns into a homogeneous Neumann condition (on both sides in the case of interfaces).

\subsection{Properties of $\boldsymbol{A}_{\boldsymbol{h}}$}

\paragraph{Norms.}
We define the energy norm
\begin{align}
\tn v \tn_h^2 &= \| v \|_{a_h}^2 + \| v \|_{b_h}^2 + \| v \|_{s_h}^2
\\&\quad \nonumber
+ \| \{ h^{-1} 2\mu \}^{-1/2} \{ 2\mu\epsilon(v) \cdot n \} \|_{\Gamma_I\cup\Gamma_D}^2
\\&\quad \nonumber
+ \| \{ h^{-1} \lambda \}^{-1/2} \{ \lambda\Trace(\epsilon(v)) \} \|_{\Gamma_I\cup\Gamma_D}^2
\end{align}
where
\begin{equation}
\| v \|_{a_h}^2 = a_h(v,v) \,, \qquad
\| v \|_{b_h}^2 = b_h(v,v) \,, \qquad
\| v \|_{s_h}^2 = s_h(v,v)
\end{equation}

\begin{lem}[Continuity and Coercivity]
The bilinear form $A_h$ is continuous on $V+V_h$, i.e., there exists a constant such that
\begin{equation} \label{eq:Ah-cont}
\left|A_h(v,w)\right| \lesssim \tn v \tn_h \tn w \tn_h \,, \qquad \forall v,w\in V + V_h
\end{equation}
For $\beta$ large enough $A_h$ is coercive on $V_h$, i.e., there exists a constant such that
\begin{equation} \label{eq:Ah-coer}
A_h(v,v) \gtrsim \tn v \tn_h^2 \,, \qquad \forall v \in V_h
\end{equation}
\end{lem}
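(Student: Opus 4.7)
The continuity bound \eqref{eq:Ah-cont} is obtained by applying Cauchy--Schwarz term by term to the decomposition $A_h = a_h + s_h + \beta b_h + c_h$. Only the consistency contribution $c_h$ is nontrivial; I split $\sigma(v)\cdot n = 2\mu\epsilon(v)\cdot n + \lambda\,\Trace(\epsilon(v))\,n$ and multiply and divide by the corresponding weights, e.g.,
\begin{equation*}
(\{2\mu\epsilon(v)\cdot n\}, [w])_{\Gamma_I\cup\Gamma_D} = \bigl(\{h^{-1}2\mu\}^{-1/2}\{2\mu\epsilon(v)\cdot n\}, \{h^{-1}2\mu\}^{1/2}[w]\bigr)_{\Gamma_I\cup\Gamma_D},
\end{equation*}
so that Cauchy--Schwarz pairs exactly two summands of $\tn v\tn_h$ with two summands of $\tn w\tn_h$. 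Since $a_h$, $s_h$, $b_h$ are symmetric positive semi-definite, continuity on $V+V_h$ follows immediately.

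\textbf{Coercivity.} For $v \in V_h$, the symmetry of $c_h$ gives $c_h(v,v) = -2(\{\sigma(v)\cdot n\},[v])_{\Gamma_I\cup\Gamma_D}$, hence
\begin{equation*}
A_h(v,v) = \|v\|_{a_h}^2 + \|v\|_{s_h}^2 + \beta\|v\|_{b_h}^2 - 2(\{\sigma(v)\cdot n\},[v])_{\Gamma_I\cup\Gamma_D}.
\end{equation*}
Applying Young's inequality with a parameter $\delta > 0$ to the cross term produces a contribution $\delta^{-1}\|v\|_{b_h}^2$ and $\delta$ times the two consistency norms appearing in $\tn v\tn_h^2$. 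The key step is to dominate those consistency norms by $\|v\|_{a_h}^2 + \|v\|_{s_h}^2$. Using the identity $\{h^{-1}2\mu\} = 4h^{-1}\frac{\mu_i\mu_j}{\mu_i+\mu_j}$ and the harmonic-mean bound $\frac{\mu_i\mu_j}{\mu_i+\mu_j} \leq \min(\mu_i,\mu_j)$, a direct calculation on a single interface yields
\begin{equation*}
\bigl|\{h^{-1}2\mu\}^{-1/2}\{2\mu\epsilon(v)\cdot n\}\bigr|^2 \;\leq\; 2h\bigl(\mu_i|\epsilon(v_i)\cdot n|^2 + \mu_j|\epsilon(v_j)\cdot n|^2\bigr),
\end{equation*}
with an analogous bound for the $\lambda$-term. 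Integrating, summing over subdomains, and invoking the subdomain inverse inequality \eqref{eq:inverse-subdomain} converts the resulting boundary traces into bulk stress energy plus stabilization, giving precisely $\|v\|_{a_h}^2 + \|v\|_{s_h}^2$. Choosing $\delta$ small and $\beta$ correspondingly large absorbs the indefinite term and delivers $A_h(v,v) \gtrsim \|v\|_{a_h}^2 + \|v\|_{s_h}^2 + \|v\|_{b_h}^2$. Since the same boundary-trace estimate also dominates the two remaining summands of $\tn v\tn_h^2$ by $\|v\|_{a_h}^2 + \|v\|_{s_h}^2$ on $V_h$, the bound \eqref{eq:Ah-coer} follows.

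\textbf{Main obstacle.} The delicate point is ensuring the weighted averaging behaves robustly under arbitrarily large material contrasts across interfaces, which is inevitable here because $\mu = \chi\widehat{\mu}$ may approach $\chi_\mathrm{min}\widehat{\mu}$ on one side while remaining $O(1)$ on the other. The harmonic-mean identity above is exactly what makes the constants in Young's and Cauchy--Schwarz steps independent of the ratio $\mu_i/\mu_j$, so the threshold for $\beta$ and the hidden constants can be chosen uniformly in $\chi$; tracking this weighting carefully through the $c_h$-estimate is the crux of the proof.
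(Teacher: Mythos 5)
Your proof is correct and follows essentially the same route as the paper: Cauchy--Schwarz with the inserted weights $\{h^{-1}2\mu\}^{\pm 1/2}$, $\{h^{-1}\lambda\}^{\pm 1/2}$ for continuity, then Young's inequality plus the subdomain inverse inequality \eqref{eq:inverse-subdomain} and absorption via small $\delta$ and large $\beta$ for coercivity. The only (cosmetic) difference is in the contrast-robustness step: you bound the weighted consistency norms directly via the harmonic-mean inequality $\tfrac{\mu_i\mu_j}{\mu_i+\mu_j}\leq\min(\mu_i,\mu_j)$, whereas the paper routes the same estimate through a conjugate weighted-average identity; both yield constants uniform in $\mu_i/\mu_j$, and you correctly note the final step of re-including the two trace summands of the energy norm.
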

\begin{proof}
\textbf{Continuity (\ref{eq:Ah-cont}).}
By the triangle and Cauchy--Schwarz inequalities we have
\begin{align}
| A_h(v,w) | &\leq |a_h(v,w)| + |s_h(v,w)| + \beta |b_h(v,w)| + |c_h(v,w)|
\\&\leq \| v \|_{a_h} \| w \|_{a_h}
+ \| v \|_{s_h} \| w \|_{s_h} + \beta \| v \|_{b_h}  \| w \|_{b_h}
\\&\quad \nonumber
+ \left| (\{\sigma(v)\cdot n\},[w])_{\Gamma_I \cup \Gamma_D} \right|
+ \left| (\{\sigma(w)\cdot n\},[v])_{\Gamma_I \cup \Gamma_D} \right|
\end{align}
where only the last two terms are not included in the energy norm. We make the split
\begin{equation} \label{eq:split-integral}
(\{\sigma(v)\cdot n\},[w])_{\Gamma_I \cup \Gamma_D}
=
\underbrace{(\{2\mu\epsilon(v)\cdot n\},[w])_{\Gamma_I \cup \Gamma_D}}_{I_\mu}
+
\underbrace{(\{\lambda\Trace(\epsilon(v))n\},[w])_{\Gamma_I \cup \Gamma_D}}_{I_\lambda}
\end{equation}
Rearranging the integrand and using the Cauchy--Schwarz inequality we obtain
\begin{align}
\left|
I_\mu
\right|
&=
\left|
(\{h^{-1}2\mu\}^{-1/2}\{2\mu\epsilon(v)\cdot n\},\{h^{-1}2\mu\}^{1/2}[w])_{\Gamma_I \cup \Gamma_D}
\right|
\\&\leq \label{eq:bound-Imu}
\left\|
\{h^{-1}2\mu\}^{-1/2}\{2\mu\epsilon(v)\cdot n\}
\right\|_{\Gamma_I \cup \Gamma_D}
\left\|
\{h^{-1}2\mu\}^{1/2}[w]
\right\|_{\Gamma_I \cup \Gamma_D}
\end{align}
and analogously for $I_\lambda$ we have
\begin{align}
\left|
I_\lambda
\right|
&=
\left|
(\{h^{-1}\lambda\}^{-1/2}\{\lambda\Trace(\epsilon(v))\},\{h^{-1}\lambda\}^{1/2}[w \cdot n])_{\Gamma_I \cup \Gamma_D}
\right|
\\&\leq \label{eq:bound-Ilambda}
\left\|
\{h^{-1}\lambda\}^{-1/2}\{\lambda\Trace(\epsilon(v))\}
\right\|_{\Gamma_I \cup \Gamma_D}
\left\|
\{h^{-1}\lambda\}^{1/2}[v\cdot n]
\right\|_{\Gamma_I \cup \Gamma_D}
\end{align}
which completes the proof of continuity as all remaining terms are now trivially bounded by the energy norm.

\paragraph{Coercivity (\ref{eq:Ah-coer}).}
The definition of $A_h$ gives
\begin{align}
A_h(v,v) &= \| v \|_{a_h}^2 + \| v \|_{s_h}^2 + \beta \| v \|_{b_h}^2
-2(\{\sigma(v)\cdot n\}, [v])_{\Gamma_I \cup \Gamma_D}
\\&\geq
\| v \|_{a_h}^2 + \| v \|_{s_h}^2 + \beta \| v \|_{b_h}^2
-2 \left| (\{\sigma(v)\cdot n\}, [v])_{\Gamma_I \cup \Gamma_D} \right|
\end{align}
By the split \eqref{eq:split-integral} with $w=v$, and the triangle inequality we for the last term have
\begin{equation}
\left|
(\{\sigma(v)\cdot n\},[v])_{\Gamma_I \cup \Gamma_D}
\right|
\leq 
\left| I_\mu \right| + \left| I_\lambda \right|
\end{equation}
Next using Young's inequality $2ab \leq \delta a^2 + \delta^{-1}b^2$ with $\delta>0$ on the bound for $\left|I_\mu\right|$ in \eqref{eq:bound-Imu} and likewise on the bound for $\left|I_\lambda\right|$ in \eqref{eq:bound-Ilambda} we obtain
\begin{align}
\left|
I_\mu
\right|
&\lesssim \label{eq:term1-Imu}
\delta
\left\|
\{h^{-1}2\mu\}^{-1/2}\{2\mu\epsilon(v)\cdot n\}
\right\|_{\Gamma_I \cup \Gamma_D}^2
+
\delta^{-1}
\left\|
\{h^{-1}2\mu\}^{1/2}[v]
\right\|_{\Gamma_I \cup \Gamma_D}^2
\\
\left|
I_\lambda
\right|
&\lesssim \label{eq:someeq04975}
\delta
\left\|
\{h^{-1}\lambda\}^{-1/2}\{\lambda\Trace(\epsilon(v))\}
\right\|_{\Gamma_I \cup \Gamma_D}^2
+
\delta^{-1}
\left\|
\{h^{-1}\lambda\}^{1/2}[v\cdot n]
\right\|_{\Gamma_I \cup \Gamma_D}^2
\end{align}
where we note that the $\delta^{-1}$ terms are included in $\|v\|_{b_h}^2$.
As a technical tool we now introduce the conjugate operator to the weighted average
\begin{equation}
\llbracket v \rrbracket = \frac{\mu_j}{\mu_i + \mu_j} v_i - \frac{\mu_i}{\mu_i + \mu_j} v_j
\quad\text{on $\partial\Omega_i\cap\partial\Omega_j$}
\,,\qquad
\llbracket v \rrbracket = v_i \quad\text{on $\partial\Omega_i\cap\Gamma_D$}
\end{equation}
which satisfies the following basic identity and inequalities
\begin{equation} \label{eq:new-weighted-id}
\{ a b \} = 2\{ a \}\langle b \rangle + \llbracket a \rrbracket [ b ] \, ,
\qquad
\frac{\llbracket c^{1/2} \rrbracket^2}{\{ c \}}
\leq
\frac{\{ c^{1/2} \}^2}{\{ c \}} \leq 1 \quad \text{for $c_i,c_j>0$}
\end{equation}
Using this identity we have
\begin{align}
\{2\chi\mu\epsilon(v)\cdot n\}
&=
2\{(h^{-1}2\mu)^{1/2}\}
\langle (h 2\mu)^{1/2}\epsilon(v)\cdot n \rangle
\\&\quad + \nonumber
\llbracket(h^{-1}2\mu)^{1/2}\rrbracket
[ (h 2\mu)^{1/2}\epsilon(v)\cdot n ]
\end{align}
which for the first term in \eqref{eq:term1-Imu} gives
\begin{align}
&\left\|
\{h^{-1}2\mu\}^{-1/2}\{2\mu\epsilon(v)\cdot n\}
\right\|_{\Gamma_I \cup \Gamma_D}^2 \nonumber
\\& \qquad\quad
\lesssim
\left\|
\frac{\{(h^{-1} 2\mu)^{1/2}\}}{\{h^{-1}2\mu\}^{1/2}}
\langle (h 2\mu)^{1/2}\epsilon(v)\cdot n \rangle
\right\|_{\Gamma_I \cup \Gamma_D}^2
\\& \qquad\qquad\ \nonumber
+
\left\|
\frac{\llbracket(h^{-1} 2\mu)^{1/2}\rrbracket}{\{h^{-1}2\mu\}^{1/2}}
\left[ (h 2\mu)^{1/2}\epsilon(v)\cdot n \right]
\right\|_{\Gamma_I \cup \Gamma_D}^2
\\& \qquad\quad \lesssim \label{eq:someeq85347}
\left\|
\langle (h 2\mu)^{1/2}\epsilon(v)\cdot n \rangle
\right\|_{\Gamma_I \cup \Gamma_D}^2
+
\left\|
\left[ (h 2\mu)^{1/2}\epsilon(v)\cdot n \right]
\right\|_{\Gamma_I \cup \Gamma_D}^2
\\&\qquad\quad
\lesssim
\sum_{i=0}^n
h_i (2_i \mu_i \epsilon(v_i),\epsilon(v_i))_{\partial\Omega_i}
\end{align}
where we in \eqref{eq:someeq85347} use inequality \eqref{eq:new-weighted-id} and in the last
inequality we use the triangle inequality on the jump and averages.
Equivalently for the first term in \eqref{eq:someeq04975}
we get
\begin{align}
\left\|
\{h^{-1}\lambda\}^{-1/2}\{\lambda\Trace(\epsilon(v))\}
\right\|_{\Gamma_I \cup \Gamma_D}^2
\lesssim 
\sum_{i=0}^n
h_i (\lambda_i \Trace(\epsilon(v_i))I,\epsilon(v_i))_{\partial\Omega_i}
\end{align}
so for the sum of these terms we have
\begin{multline}
\left\|
\{h^{-1}2\mu\}^{-1/2}\{2\chi\mu\epsilon(v)\cdot n\}
\right\|_{\Gamma_I \cup \Gamma_D}^2
+
\left\|
\{h^{-1}\lambda\}^{-1/2}\{\chi\lambda\Trace(\epsilon(v))\}
\right\|_{\Gamma_I \cup \Gamma_D}^2
\\
\lesssim
\sum_{i=0}^n h_i (\sigma_i(v_i),\epsilon(v_i))_{\partial\Omega_i}
\lesssim
\sum_{i=0}^n
(\sigma_i(v_i),\epsilon(v_i))_{\Omega_i} + s_{i,h}(v_i,v_i)
\end{multline}
where we finally utilize the inverse inequality \eqref{eq:inverse-subdomain}.
In summary this calculation yields
\begin{equation}
\left|
(\{\sigma(v)\cdot n\},[v])_{\Gamma_I \cup \Gamma_D}
\right|
\lesssim
\delta \left(
\| v \|_{a_h}^2 + \| v \|_{s_h}^2
\right)
+
\delta^{-1} \| v \|_{b_h}^2
\end{equation}
and thus, choosing $\delta$ small enough such that we can hide the $\delta$-term and $\beta$ large enough such that we can hide the $\delta^{-1}$-term in the penalty term will produce a coercive method.
\end{proof}

\begin{rem}\label{rem:muzero} Note that in this proof of coercivity it is not central that the lower bound $\mu_\mathrm{min} \leq \mu$ holds everywhere in $\Omega$ but rather that the inverse inequality \eqref{eq:inverse-subdomain} holds on each subdomain.
Through the selection of $s_{i,h}$ the method actually can be made to accommodate extremely small values for $\mu_\mathrm{min}$, even zero.
For example, if we would choose $s_{0,h}$
as in \eqref{eq:stab-form} with the modification that we add the stabilization on every interior face in $\mcT_{0,h}$ we could allow $\chi=0$ (and in turn $\mu=0$) anywhere in $\Omega_0$ and the inverse inequality would still hold as long as $\{\mu\} \neq 0$ on a non-empty part of $\partial\Omega_0$. 
\end{rem}

\begin{rem}
For results on  the existence and uniqueness of a discrete solution, condition number estimates and a priori error estimates, we refer to \cite{Burman2012}.
\end{rem}

\section{Topology Optimization} \label{section:top-opt}

\paragraph{A Schematic View of an Optimization Procedure.}
For pedagogical purposes we first outline a simple steepest descent type algorithm to give a general view of the steps taken and the necessary quantities to compute in the optimization procedure. For $\chi^\ell \in W$, fulfilling the volume constraint \eqref{eq:vol-constraint}, we formulate the steepest descent step
\begin{align} \label{eq:schematic-descent-step}
\chi^{\ell+1} &= \chi^{\ell} - \alpha^{\ell} \max_{\delta\chi\in W , \, \langle\delta\chi,\delta\chi\rangle=1} \langle d_\chi J, \delta\chi \rangle
\end{align}
where $\alpha^\ell$ is a step size and the maximum gives the descent direction. By iterating \eqref{eq:schematic-descent-step} with a small enough step size $\alpha^\ell$ we expect to converge to a minima of $J(\chi)$, i.e., a solution to the minimization problem \eqref{eq:min}.
Evaluating the total derivative is somewhat intricate as $J(\chi)=J_\chi(u_\chi)$ where $u_\chi$ is the solution to the elasticity problem \eqref{eq:weak}. However, using \eqref{eq:total-to-partial}, which holds
under a certain constraint discussed below, we can replace this total derivative with a partial derivative of the Lagrangian \eqref{eq:lagrangian}
\begin{equation} \label{eq:simplified-part-der}
\langle d_\chi J, \delta\chi \rangle
=
\langle \partial_\chi \mcL, \delta\chi \rangle
\end{equation}
Focusing on the case of compliance, i.e., $J(\chi)=l_\chi(u_\chi)$, the Lagrangian may be expressed
\begin{equation}
\mcL(\chi,u,\eta) = \Pi_\chi(u) + \eta \Lambda_\chi
\end{equation}
where we recall from \eqref{eq:compliance-lagrangian} the Ritz and volume constraint functionals
\begin{align}
\Pi_\chi(u) &= -2\left( \frac{1}{2}a_\chi(u,u) - l_\chi(u)\right)
\quad\text{and}\quad
\Lambda_\chi = ( \chi - \Delta_{\mathrm{vol}} , 1 )_{\Omega_0}
\end{align}
For a discretized $\chi$ described via parameters $\chi|_k$ we can formulate the descent direction using the usual gradient, which yields the steepest descent step
\begin{equation}
\chi^{\ell+1} = \chi^{\ell} - \alpha^{\ell} \nabla_{\chi} \mcL
\qquad\text{where}\quad
(\nabla_{\chi} \mcL) |_k = \frac{\partial\mcL}{\partial\chi|_k}
\end{equation}
While such an iteration seems simple enough, the numerous constraints, implied by the derivation and also by properties we desire for the final design to be useful in practice, make it non-trivial to construct an algorithm for the optimization procedure. The key considerations are:
\begin{itemize}
\item \emph{Density Field Contrast.}
To be able to get a clear view of where in the design domain material should be present we want the resulting density field $\chi$ to be of high contrast with as little intermediate values as possible.
This is handled by posing the optimization procedure in terms of an auxiliary field $\rho$ on which the density field $\chi$ depends in a non-linear way.
We describe this in the paragraph ``Density Field'' below.

\item \emph{Mesh Size Independence.}
The results should be independent of mesh size, in the sense that a refined computational grid should not give a drastically different result. This is solved by using a filter on either the density field or the sensitivities. A filter also gives the designer some control of the resulting density field.
The sensitivity filter we employ in our numerical examples is adapted to the cut finite element method and is described in the paragraph ``Sensitivity Filter'' below.

\item \emph{Update Constraints.}
For the above derivation to hold there are a number of constraints that must be fulfilled, which complicates the scheme. In particular, \eqref{eq:simplified-part-der} only holds at points $(\chi,u,p,\eta)$ where: $\chi$ satisfies the volume constraint \eqref{eq:vol-constraint}; $u$ satisfies the primal problem \eqref{eq:primal-prob}; $p$ satisfies the dual problem \eqref{eq:dual-prob}; and $\eta$ is chosen such that the partial derivative on the right of \eqref{eq:simplified-part-der} is tangent to the manifold of admissible density functions, i.e., density functions satisfying the volume constraint.
This last requirement implies that the update scheme must ensure that the next iteration of the density field $\chi^{\ell+1}$ also satisfies the volume constraint  \eqref{eq:vol-constraint}. A heuristic procedure for iterating the density while maintaining the constraints is detailed in the paragraph ``Updating Scheme'' below.
\end{itemize}

\paragraph{Density Field.}
To obtain an optimal density field $\chi$ with high contrast
we adapt the SIMP topology optimization procedure, see \cite{Bendsoe1989,Zhou1991}, to cut finite element methods.
On the design domain $\Omega_0$ we introduce the auxiliary field
\begin{equation} \label{eq:approx-rho}
\rho_0 \in \{ w \in W_{0,h,k}|_{\Omega_0} \, : \, 0 \leq w \leq 1 \text{ on $\Omega_0$} \}
\end{equation}
and as an approximation for the density on $\Omega_0$ we choose
\begin{equation} \label{eq:approx-chi}
\chi \approx \chi_\mathrm{min} + \rho_0^q (1 - \chi_\mathrm{min})
\end{equation}
By construction $\chi : \Omega_0 \to [\chi_\mathrm{min},1]$ and
raising the power $q \geq 1$ will increase the penalization of intermediate values of $\rho_0$ and thus produce a density field $\chi$ with sharper transitions between regions in $\Omega_0$ with and without material. As noted in Remark~\ref{rem:muzero} the cut finite element method is not inherently sensitive to choosing the value of $\chi_\mathrm{min}$ large enough and the method can actually be devised to handle even the case $\chi_\mathrm{min}=0$.
Recall that the density is typically represented on a finer grid than the finite element solution as the underlying mesh in $W_{0,h,k}$ is a uniform $k$-refinement of the mesh in $V_{0,h}$.
The effect of the choice of refinement in the density mesh in relation to the polynomial order $p$ in Lagrange-type finite element spaces was numerically studied in \cite{MR3670454}, where a similar multi-resolution topology optimization approach was used.

\paragraph{Sensitivity Filter.}
It is well known that the standard SIMP procedure suffers from checker board patterns in the density field, especially when using low order elements, see \cite{Li2001} and the references therein. The compliance problem itself is also not well posed, yielding numerical approximations consisting of finer and finer structures when refining the discretization grid. Both these issues are commonly remedied by introducing a filter, applied to either the density field or to the sensitives. In CutFEM the domain $\Omega_0$ may cut the mesh in an arbitrary fashion and therefore the element volume $|T \cap \Omega_0|$ typically varies even on structured meshes, and may in fact approach zero. We devise a sensitivity filter that takes into account both the variable element sizes and the interface conditions as follows:
\begin{itemize}
\item 
Analogously to the auxiliary field $\rho_0$ in the design domain $\Omega_0$, we in the nondesign domain regions $\{\Omega_i\}_{i=1}^n$ define fields $\{\rho_i\}_{i=1}^n$ on the parametric meshes refined such that their physical element sizes are approximately the same as for $\rho_0$. 
While these fields clearly are constant $\rho_i=1$, $i=1,\dots,n$, they will be used in the construction of a filter with suitable behavior near the interfaces.

\item We number the elements in the mesh used to describe $\rho_i$ from 1 to $N_i$ and denote element $j$ in the mesh by $T_{i,j}$.
In each element $T_{i,j}$ the auxiliary field is constant and
we employ the shorthand notation
\begin{equation}
\rho_{i,j} = \rho_i |_{T_{i,j}}
\end{equation}
and for the element volume we write
\begin{equation}
V_{i,j} = | T_{i,j} \cap \Omega_i |
\end{equation}

\item
We define the discrete weight factor
  \begin{equation} \label{eq:weight-factor}
    H_{i,j}^k = \max\left(0,R_\mathrm{min} - \mathrm{dist}(T_{0,k},T_{i,j}) \right)
  \end{equation}
  where $R_\mathrm{min}$ is the filter radius and $\mathrm{dist}(T_{0,k},T_{i,j})$ is the distance between the centroids of elements $T_{0,k}$ and $T_{i,j}$.
  
\item
Inspired by the sensitivity filter employed in \cite{Sigmund2007} for meshes with varying element sizes
our sensitivity filter in $\Omega_0$ takes the form
\begin{equation}\label{eq:filter}
\frac{\widetilde{\partial \Pi_\chi}}{\partial \rho_{0,k}}
= 
\frac{V_{0,k}}{\max(\rho_{0,k}\, ,\gamma)} \frac{\sum\limits_{i=0}^n \sum\limits_{j=1}^{N_i} H^k_{i,j} \rho_{i,j}  \frac{\partial \Pi_\chi}{\partial \rho_{i,j}} / \max(V_{i,j} ,\gamma)  }{\sum\limits_{i=0}^n \sum\limits_{j=1}^{N_i} H^k_{i,j}}
\end{equation}
where $\gamma > 0$ is some very small parameter used in combination with a maximum for avoiding numerical issues when the denominators tend to zero.
Note that as the derivative $\frac{\partial \Pi_\chi}{\partial \rho_{i,j}}$ is an integral over
$T_{i,j} \cap \Omega_i$ and $V_{i,j}=|T_{i,j} \cap \Omega_i|$ the quotient $\frac{\partial \Pi_\chi}{\partial \rho_{i,j}} / V_{i,j}$ is actually a dimensionless quantity.

\end{itemize}

\begin{rem}[Ghost Derivatives]
Our sensitivity filter \eqref{eq:filter} contains derivatives $\frac{\partial \Pi_\chi}{\rho_{i,j}}$ also in nondesign domain regions $\Omega\setminus\Omega_0$, which might seem strange as $\rho_{i}$ is fixed in those parts, see Figure~\ref{fig:filter}. The reason for including these `ghost derivatives' in the filter is to avoid the removal of thin layers of material close to interfaces. Thus, the filter \eqref{eq:filter} makes no distinction between the various parts of $\Omega$.
\end{rem}

\begin{rem}[Choice of Filter]
While the choice of filter is obviously an important topic (see, e.g., \cite{Bourdin2001,Sigmund2007,MR3620802}), we do not view our particular choice of filter  \eqref{eq:filter} to be central for employing CutFEM based topology optimization. Most likely a density or PDE based filter could just as easily be adapted to the CutFEM situation and yield similar results. In practical applications the evaluation speed of the filter is crucial. For that reason it would be interesting to explore the possibility to devise a filter which utilizes structured background grids for speed while still allowing design domains with complicated geometries via CutFEM.
\end{rem}

\begin{figure}\centering
\includegraphics[width=0.3\linewidth]{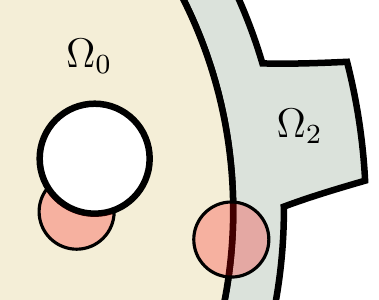}
\caption{Illustration of sensitivity filter area at two positions in $\Omega_0$ within the domain given in Figure~\ref{fig:domain}; on the boundary and on the interface. Note that the filter in the vicinity of the interface also takes into account parts of the domain outside the design domain $\Omega_0$.}
\label{fig:filter}
\end{figure}

\paragraph{Updating Scheme.} 
To update the design domain auxiliary field $\rho_0$ we use the optimal criteria method following a heuristic updating scheme, see \cite{Bendsoe95, Andreassen2011}.
The procedure is as follows:
\begin{itemize}
\item
For a given auxiliary field $\rho_0^\ell$, such that the density field $\chi^\ell$ satisfies the volume constraint \eqref{eq:vol-constraint}, we solve the elasticity problem \eqref{eq:weak}. The solution $u_{\chi^\ell}$ is used for evaluating derivatives of $\Pi_\chi$.

\item
The auxiliary field $\rho_0^\ell$ is elementwise updated according to the heuristic scheme
\begin{equation}\label{eq:optimal}
\rho^{\ell+1}_{0,k} =
\begin{cases}
\max(0, \rho_{0,k}^\ell - m)&\mathrm{if}\  \rho_{0,k}^\ell B_{0,k} \leq \max(0, \rho_{0,k}^\ell - m)
\\
\min(1, \rho_{0,k}^\ell + m)&\mathrm{if}\  \rho_{0,k}^\ell B_{0,k} \geq \min(1, \rho_{0,k}^\ell + m)
\\
\rho_{0,k}^\ell B_{0,k} \quad&\text{otherwise}
\end{cases}
\end{equation}
where $m \in (0,1]$ is a user specified positive move limit and
\begin{equation}
B_{0,k} = \frac{1}{\eta} \frac{\widetilde{\partial \Pi_\chi}}{\partial\rho_{0,k}} \Big/ \frac{{\partial \Lambda_\chi}}{\partial\rho_{0,k}}
\end{equation}

The unknown Lagrange multiplier $\eta$ is found through a simple bisection procedure
where $\rho_0$ is repeatedly updated using \eqref{eq:optimal} with different values of $\eta$ until the volume constraint \eqref{eq:vol-constraint} is satisfied.

\item
Instead of finding an initial guess for $\rho_0$ for which the volume constraint \eqref{eq:vol-constraint} is satisfied we first assume material everywhere in $\Omega_0$ and then gradually enforce the volume constraint during the iteration procedure by exchanging ${\Delta}_\mathrm{vol}$ in \eqref{eq:vol-constraint} by
\begin{equation}\label{eq:Delta_vol}
    \widehat{\Delta}_\mathrm{vol} = {\Delta}_\mathrm{vol} + (1-{\Delta}_\mathrm{vol})\max(0,1-i/\mathrm{RAMP})
\end{equation}
where $i$ denotes the iteration number
and we ramp up the volume constraint to the desired value while $i<\mathrm{RAMP}$.

\end{itemize}

\section{Numerical Results}

To illustrate the density based cut finite element topology optimization including parametric nondesign domain regions we in this section present some numerical experiments in 2D.

\subsection{Experimental Set-up}

\paragraph{Quadrature.}
The integration procedure used was outlined in \cite{JonLarLar17} and does not differentiate between cut or non-cut elements. Note that integration in the design domain is based on the $k$-refined mesh \eqref{eq:k-ref-mesh} such that the density is constant on each element.

\paragraph{Approximation Spaces.}
We set up the approximation spaces as follows:
\begin{itemize}
\item In every subdomain $\Omega_i \subset \Omega$ the underlying finite element space is constructed as tensor product B-splines of maximum regularity with polynomial order $p=2$. Thus, the approximation spaces will be $C^1$ within each subdomain. 

\item In the design domain $\Omega_0$ the mesh is allowed to be cut by the geometry.
By rotating the background mesh counter clockwise $\pi/7$ radians
we avoid using a mesh which utilize the general structure of the geometry, which might yield too optimistic results. This produces a variety of cut elements all along the design domain boundary and also breaks symmetry.

\item The auxiliary field $\rho_0$ describing the density is discretized according to \eqref{eq:approx-rho}, i.e., $\rho_0\in W_{0,h,k}|_{\Omega_0}$ is piecewise constant on a mesh which is constructed as $k$ uniform refinements of the finite element mesh.
In the experiments we use one uniform refinement ($k=1$) and as an initial guess we take $\rho_0=1$.

\item In the nondesign domain regions the meshes are parametrically mapped to the geometries using smooth mapping such that the boundaries fit the mapped meshes perfectly. For the curved parts we use biquadratic mappings and for the ring we use a polar mapping.
 
\item Note that there is no need to use modified B-spline basis functions near the boundary even on the parametrically fitted nondesign domain regions as the Dirichlet boundary and interface conditions are weakly imposed using Nitsche's method.

\end{itemize}

\paragraph{Parameter Values.} The experiments share the following parameter values:
\begin{itemize}
\item
We assume constant material properties throughout in all parts of the domain and we use a $E$-modulus $E=1$ and a Poisson's ratio $\nu=0.3$.
From these material parameters we find the Lamé parameters via the relationships
\begin{equation}
\widehat{\mu_i}=\frac{E}{2(1+\nu)}
\,,\qquad
\widehat{\lambda_i}=\frac{E\nu}{(1+\nu)(1-2\nu)}
\end{equation}
and recall that $\mu_i = \chi\widehat{\mu}_i$ and $\lambda_i = \chi\widehat{\lambda}_i$.

\item
We use a Nitsche's penalty parameter $\beta=10 \times p^2$ and for the CutFEM in the design domain $\Omega_0$ we use the ghost penalty stabilization form $s_{0,h}$ defined in \eqref{eq:stab-form} with parameters
\begin{equation}
\gamma_{0,k}= \widehat{\mu}_0 \frac{10^{-4}}{k!}
\end{equation}

\item
For the optimization procedure we set the desired proportion of material in the design domain $\Omega_0$ to $\Delta_\mathrm{vol}=0.5$. This volume constraint is gradually enforced during the $\mathrm{RAMP}=100$ first iterations. As to penalize intermediate values in the approximation of the density $\chi$ we select the power $q=3$ in \eqref{eq:approx-chi}.
For the sensitivity filter we choose a filter radius of $R_{\mathrm{min}}=1.2 \times h$ in \eqref{eq:weight-factor} and take $\gamma=10^{-6}$ in \eqref{eq:filter}. For the updating scheme we employ a move limit of $m=0.2$ in \eqref{eq:optimal}.
\end{itemize}

\subsection{Numerical Experiments}

In the following two examples we consider cantilevers where we optimize the geometry of the design domain $\Omega_0$ with respect to compliance, i.e., we seek the critical point characterized by \eqref{eq:compliance-lagrangian}.

\paragraph{Cantilever with Parametric Reinforcements.}
As a first example we consider the design domain and nondesign regions illustrated in Figure~\ref{fig:parametric-beam}. The leftmost boundary (red lines in Figure~\ref{fig:parametric-beam-mesh}) is the Dirichlet boundary and we apply a downward facing traction force on the inner boundary of the ring (blue circle in Figure~\ref{fig:parametric-beam-mesh}). The magnitude of the traction force varies horizontally as a parabolic function that is zero on the leftmost and rightmost points of the circle. Thus, the geometry is symmetric along the horizontal midline and the boundary conditions on both sides of the midline are the same. 
Changing the sign of the traction force would simply mirror the problem about the midline. This, in combination with linear elasticity, leads us to expect the optimized geometry to be symmetric about the midline.
It can be seen in the final geometry in Figure~\ref{fig:parametric-beam-final} that this is also the case even though we have rotated the background grid in the design domain $\Omega_0$ to break symmetry. Interestingly, the optimized geometry mirrors the curved beams in the nondesign domain regions, which we suppose is a way of approximating a straight structure which does not twist when pulled. Looking at Figure~\ref{fig:parametric-beam-stress} we see that the method manages to produce stresses without discontinuities over the interfaces.

In Figure~\ref{fig:parametric-beam-mod} we modify this example by introducing a nondesign domain region in a part of the design domain $\Omega_0$ where the previous optimized geometry had material to see if we end up with the same geometry as before. It should be noted that we do not make any correction to $\Delta_\textrm{vol} = 0.5$ and therefore the final geometry will have a slightly larger proportion of material than before.
As seen in Figure~\ref{fig:parametric-beam-final-mod} the final geometry is very similar to the case without the modification.

\paragraph{Cantilever with Truss Structure Reinforcements.}
In the second example we consider the design domain and nondesign domain regions illustrated in Figure~\ref{fig:truss-structure}. Here the nondesign domain regions constitute a frame and a number of internal beams in a truss structure like arrangement.
The leftmost boundary (red line in Figure~\ref{fig:truss-structure-mesh}) is the Dirichlet boundary and we apply a unitary downward facing traction force on the rightmost boundary (blue line in Figure~\ref{fig:truss-structure-mesh}).
Note that the internal beams are not placed optimally, i.e., they are not placed in positions where the optimization procedure would necessarily place material. In Figure~\ref{fig:truss-structure-final} we see that the final design actually utilizes most of the internal structures given by the nondesign domain regions.

\def\colRatioOne{0.45}
\def\colRatioTwo{0.9}
\begin{figure}\centering
\begin{subfigure}[b]{\colRatioOne\textwidth}\centering
\includegraphics[width=\colRatioTwo\linewidth]{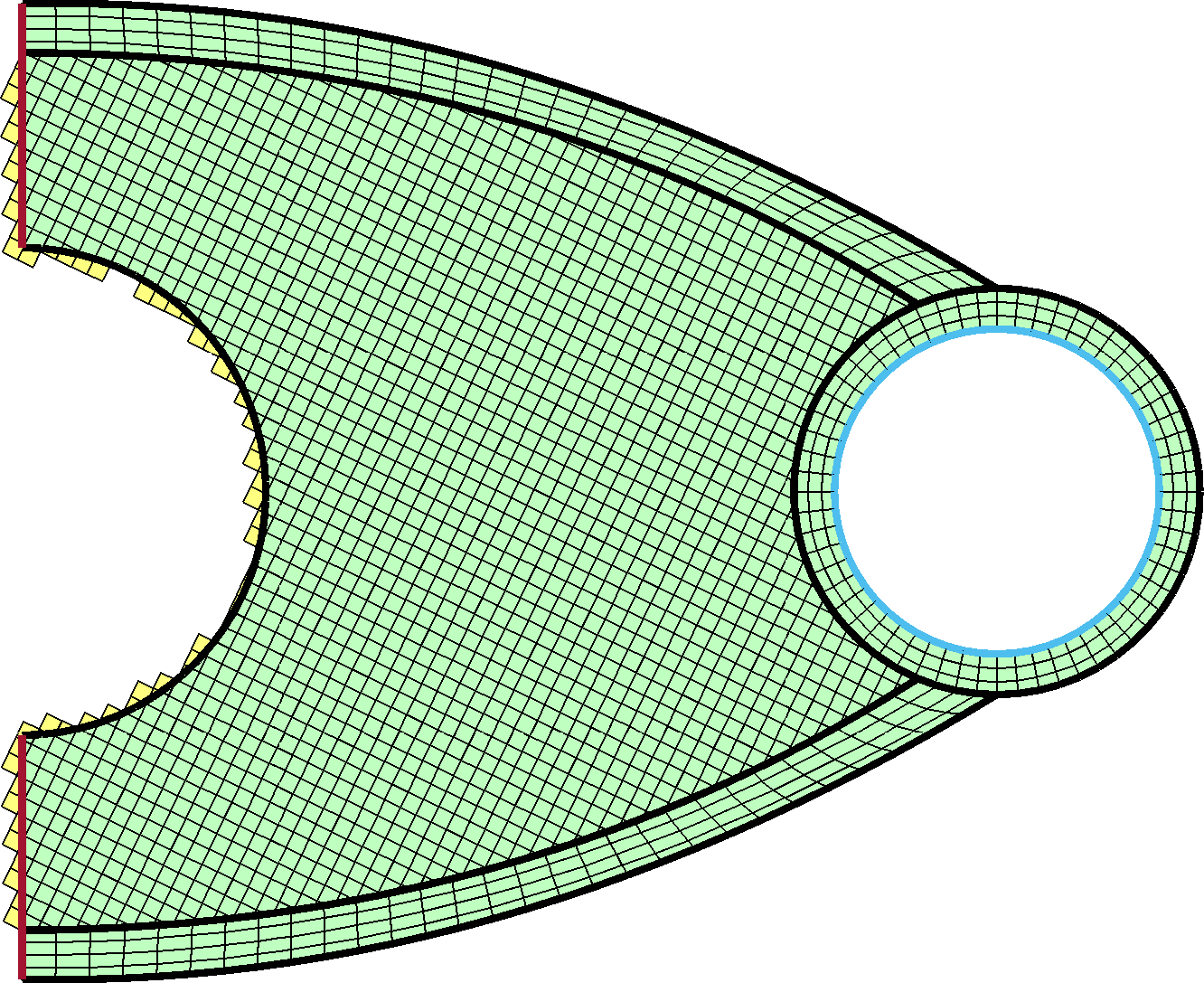}
\caption{Finite element meshes}
\label{fig:parametric-beam-mesh}
\end{subfigure}
\begin{subfigure}[b]{\colRatioOne\textwidth}\centering
\includegraphics[width=\colRatioTwo\linewidth]{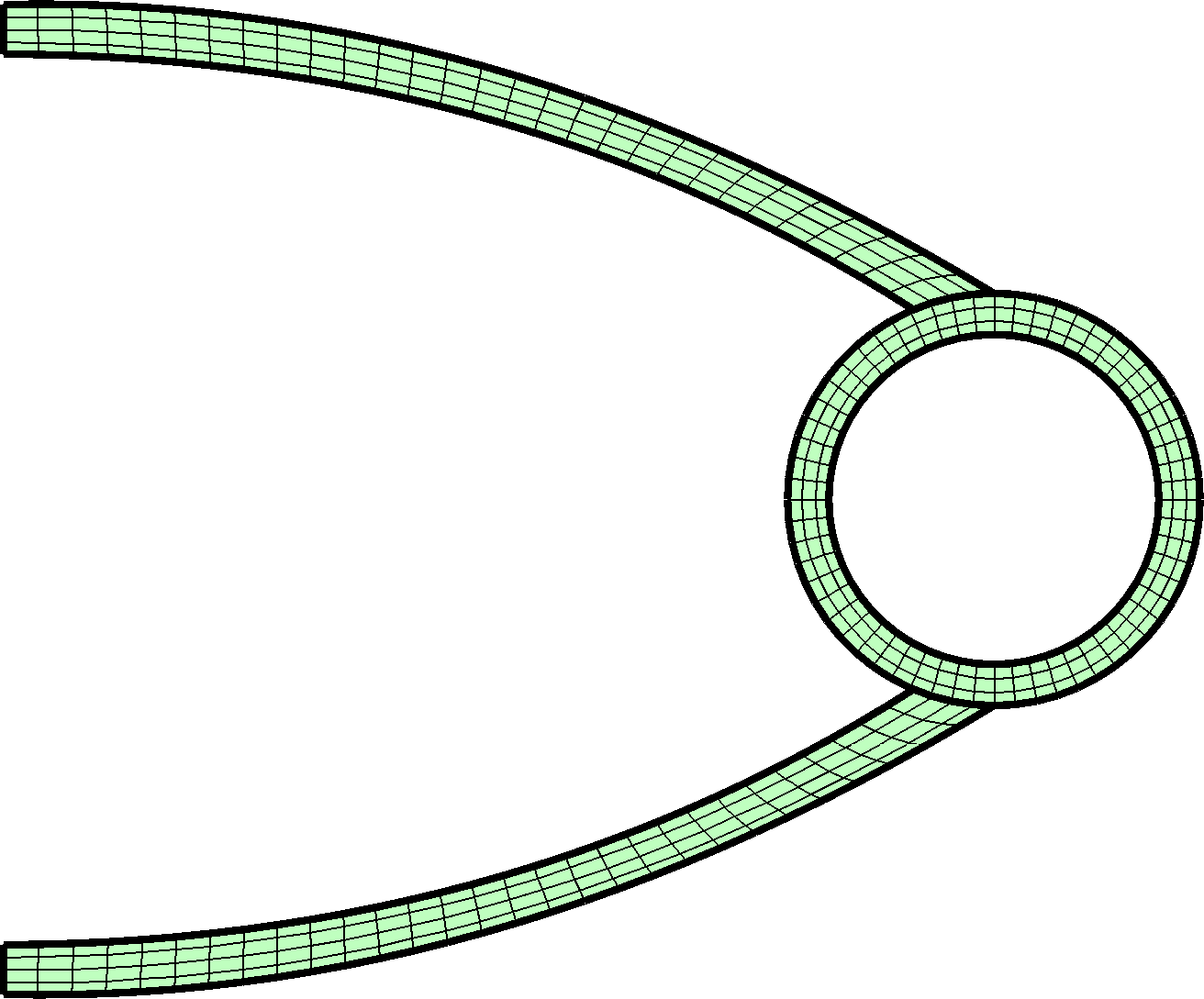}
\caption{Nondesign domain regions $\{\Omega_i\}_{i=1}^N$}
\label{fig:parametric-beam-given}
\end{subfigure}

\begin{subfigure}[b]{\colRatioOne\textwidth}\centering
\includegraphics[width=\colRatioTwo\linewidth]{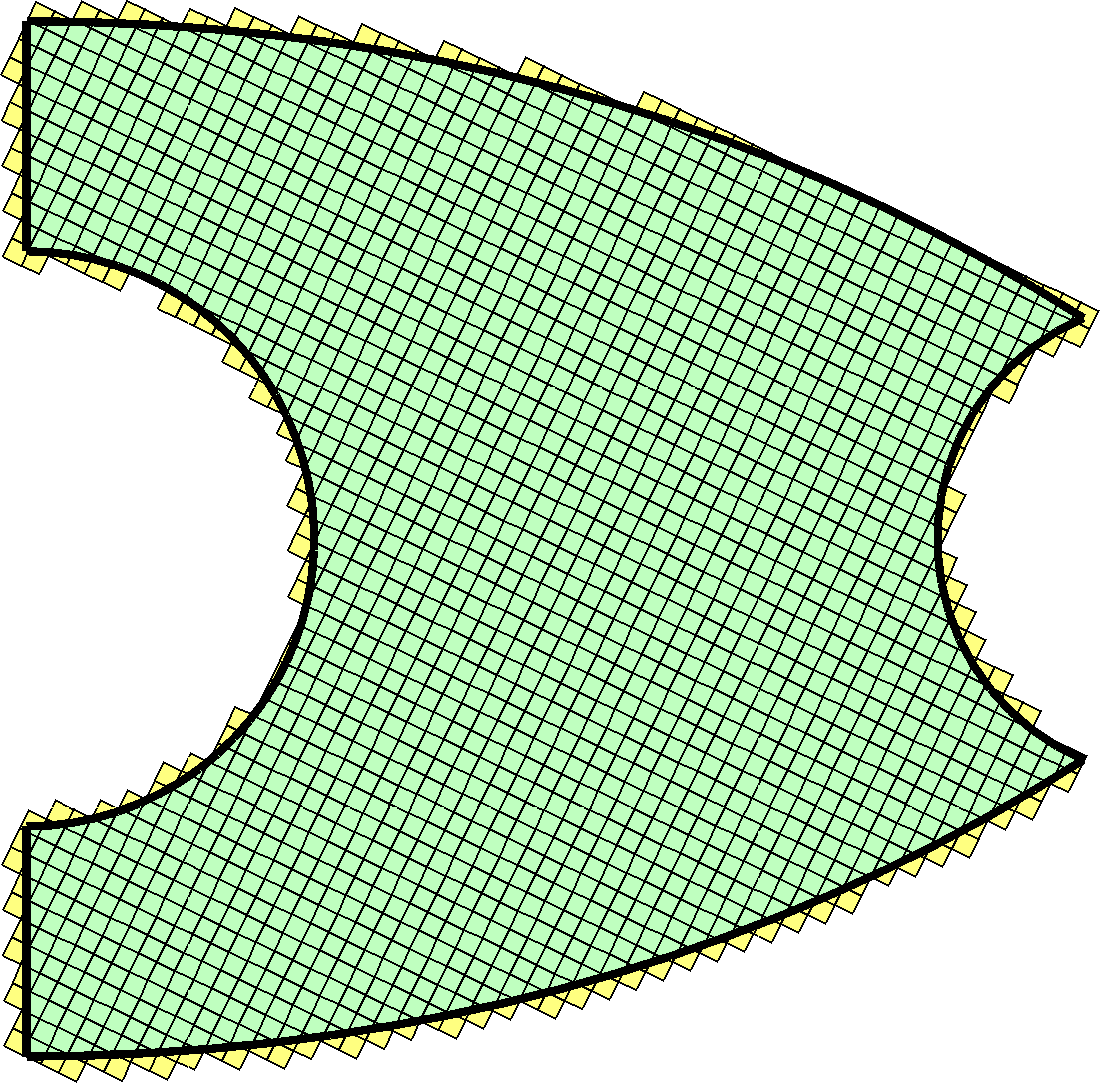}
\caption{Design domain $\Omega_0$}
\label{fig:parametric-beam-design}
\end{subfigure}
\begin{subfigure}[b]{\colRatioOne\textwidth}\centering
\includegraphics[width=\colRatioTwo\linewidth]{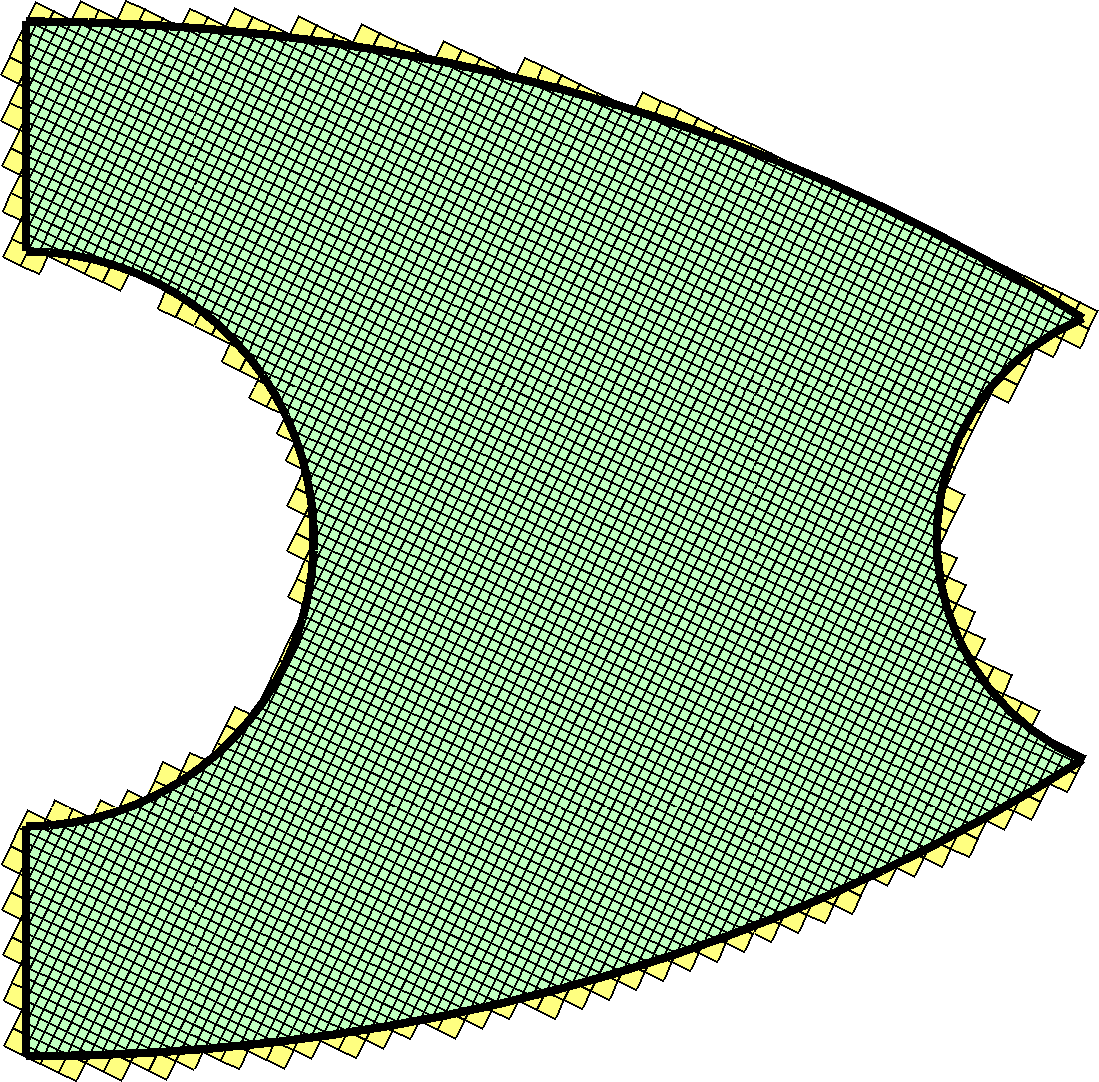}
\caption{Density resolution}
\label{fig:parametric-beam-density}
\end{subfigure}

\begin{subfigure}[b]{\colRatioOne\textwidth}\centering
\includegraphics[width=\colRatioTwo\linewidth]{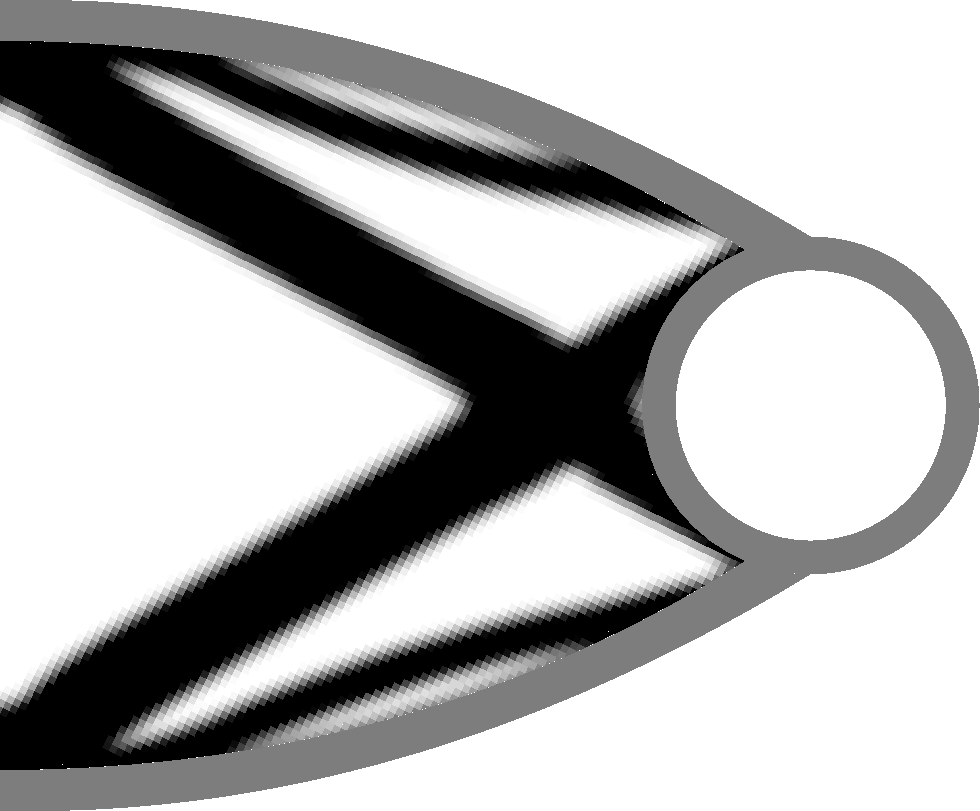}
\caption{Final geometry}
\label{fig:parametric-beam-final}
\end{subfigure}
\begin{subfigure}[b]{\colRatioOne\textwidth}\centering
\includegraphics[width=\colRatioTwo\linewidth]{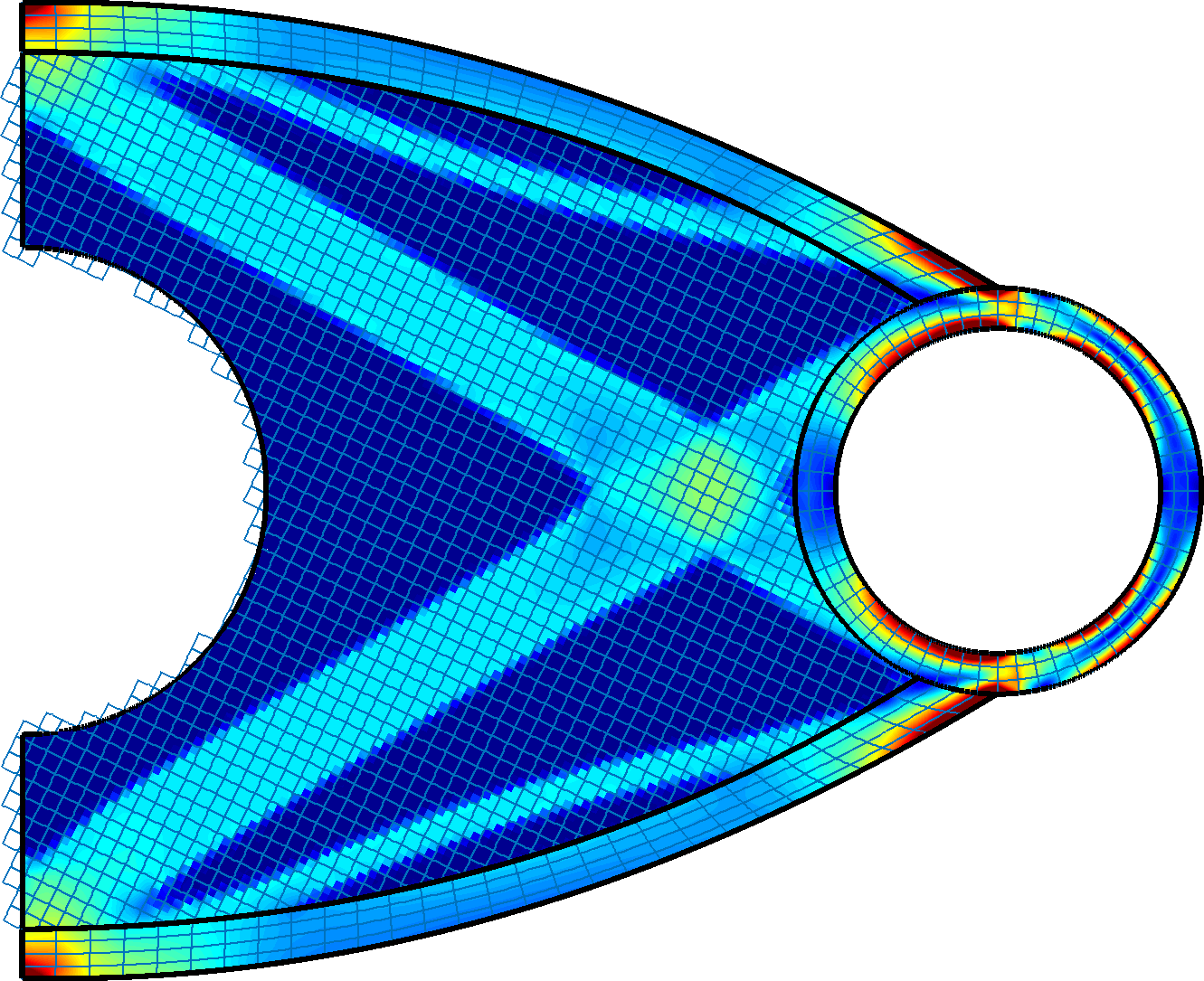}
\caption{Von-Mises stress in final geometry}
\label{fig:parametric-beam-stress}
\end{subfigure}

\caption{
{Cantilever with parametric reinforcements.}
}
\label{fig:parametric-beam}
\end{figure}

\def\colRatioOne{0.45}
\def\colRatioTwo{0.9}
\begin{figure}\centering
\begin{subfigure}[b]{\colRatioOne\textwidth}\centering
\includegraphics[width=\colRatioTwo\linewidth]{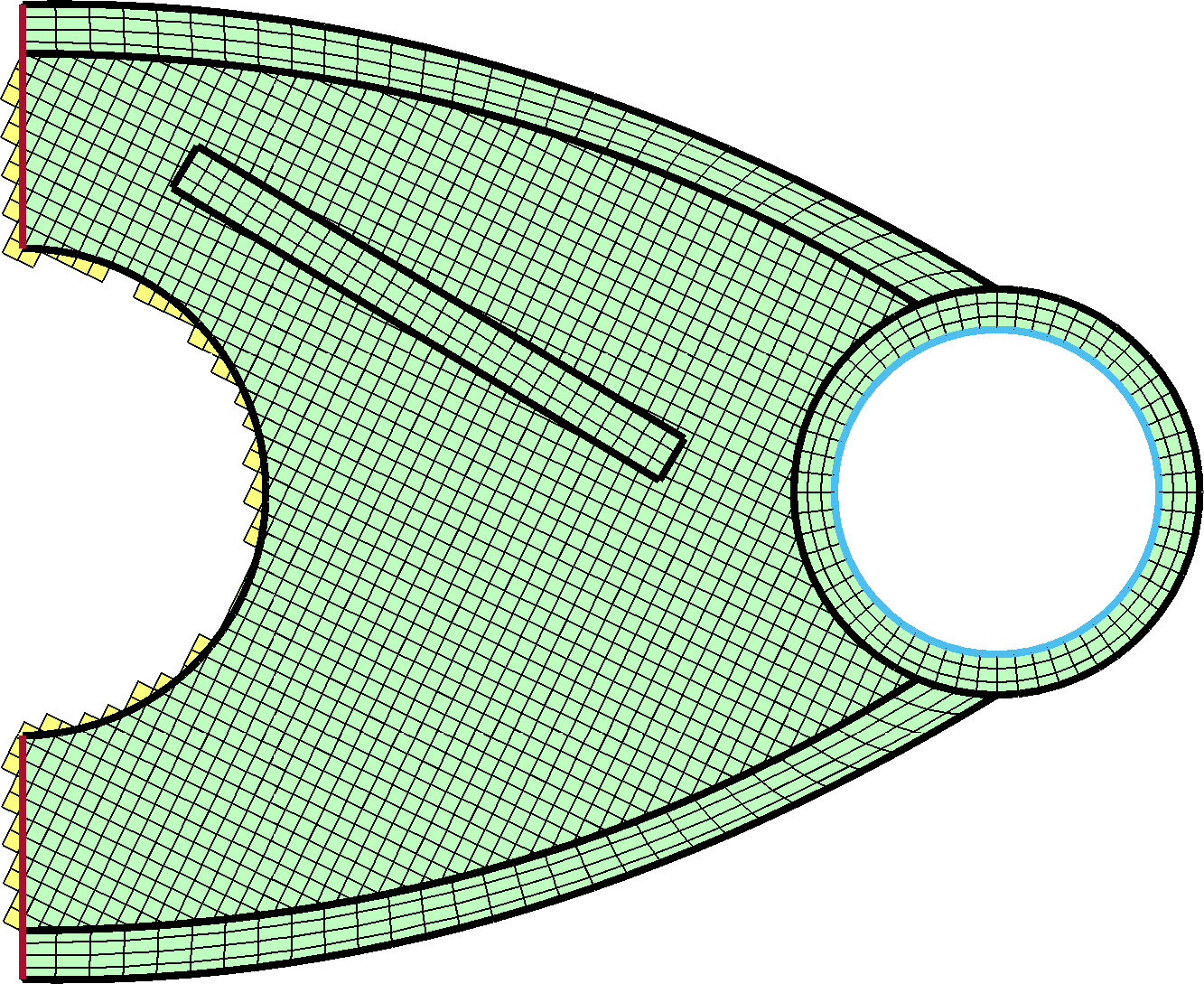}
\caption{Finite element meshes}
\label{fig:parametric-beam-mesh-mod}
\end{subfigure}
\begin{subfigure}[b]{\colRatioOne\textwidth}\centering
\includegraphics[width=\colRatioTwo\linewidth]{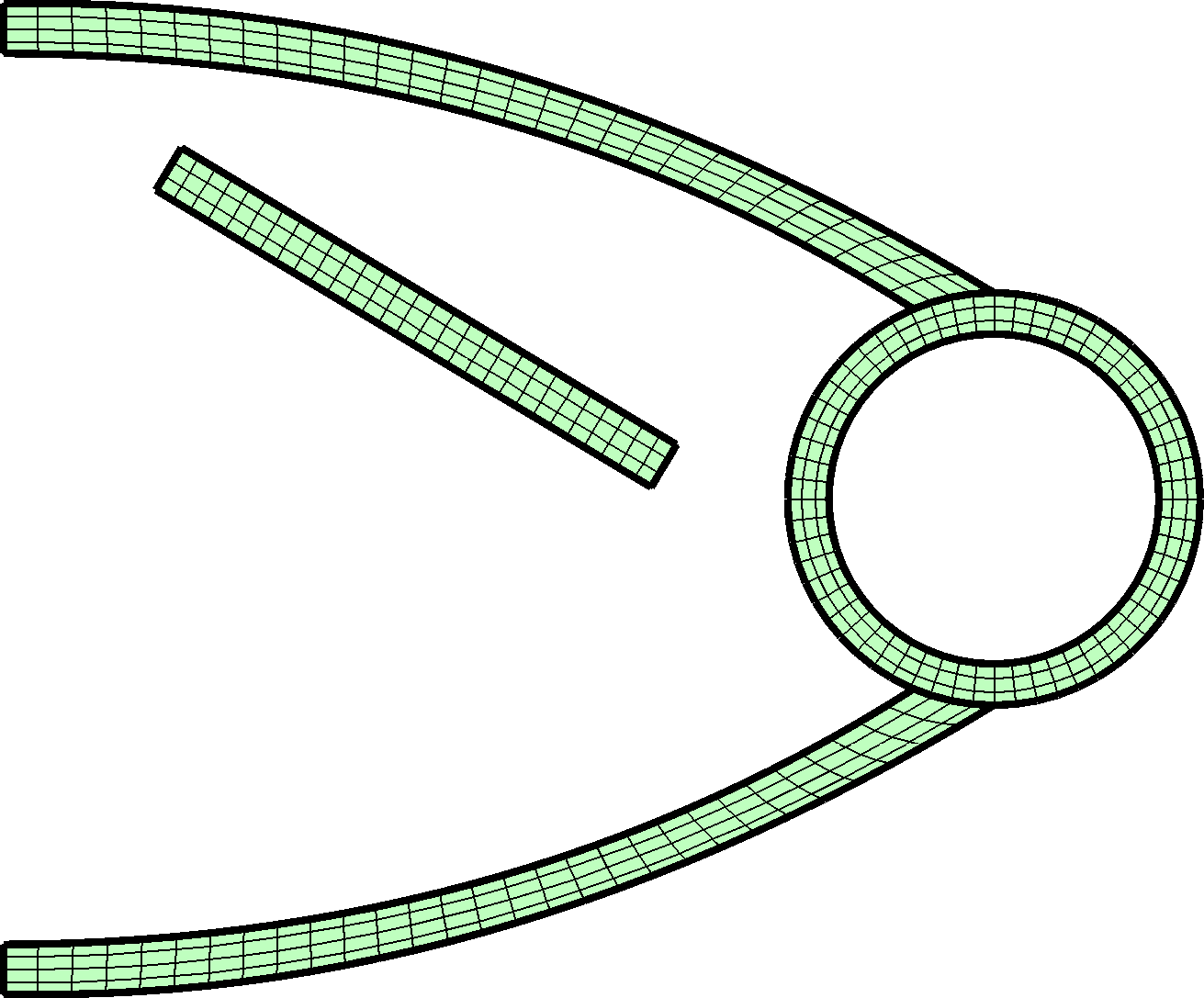}
\caption{Nondesign domain regions $\{\Omega_i\}_{i=1}^N$}
\label{fig:parametric-beam-given-mod}
\end{subfigure}

\begin{subfigure}[b]{\colRatioOne\textwidth}\centering
\includegraphics[width=\colRatioTwo\linewidth]{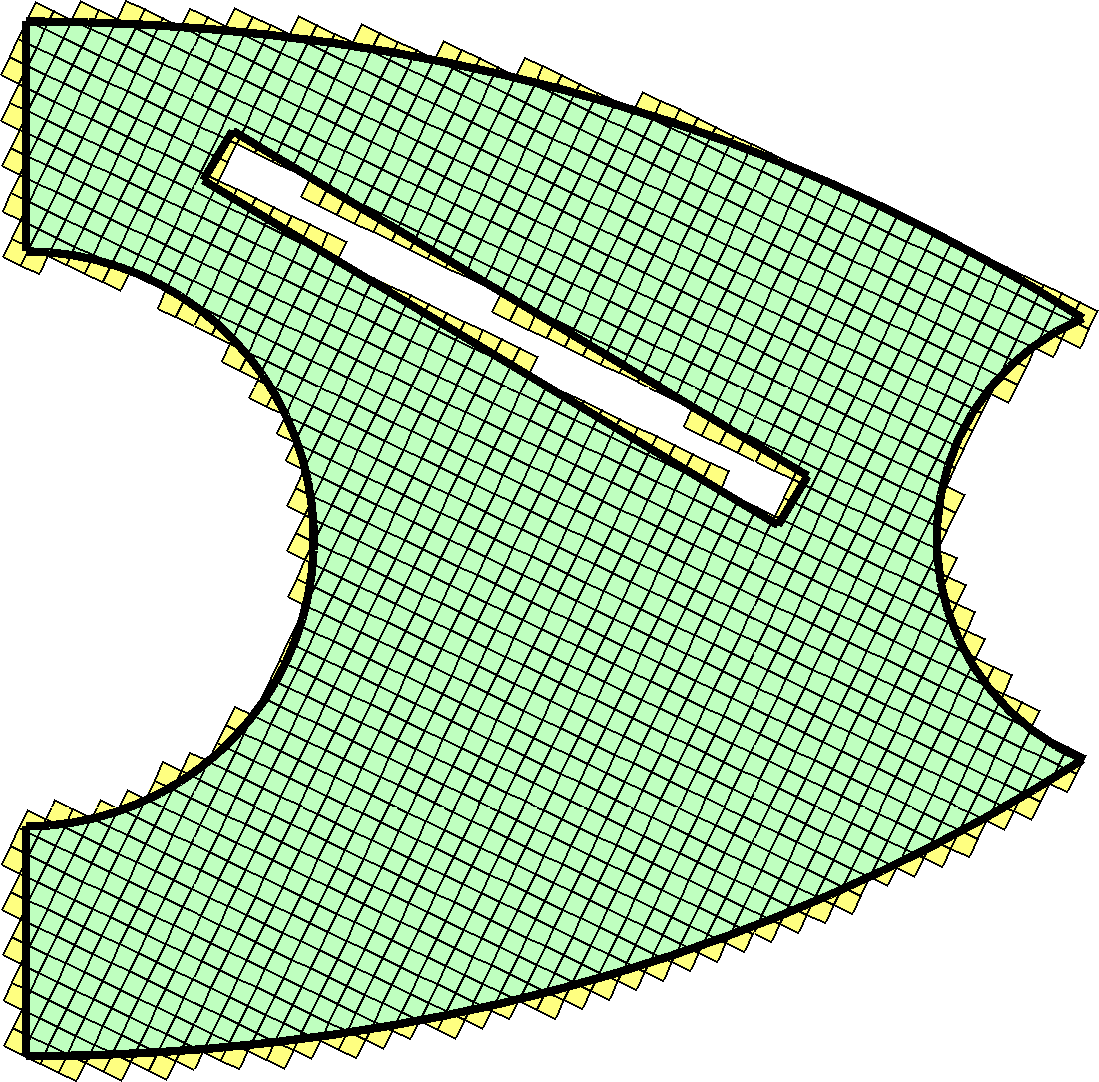}
\caption{Design domain $\Omega_0$}
\label{fig:parametric-beam-design-mod}
\end{subfigure}
\begin{subfigure}[b]{\colRatioOne\textwidth}\centering
\includegraphics[width=\colRatioTwo\linewidth]{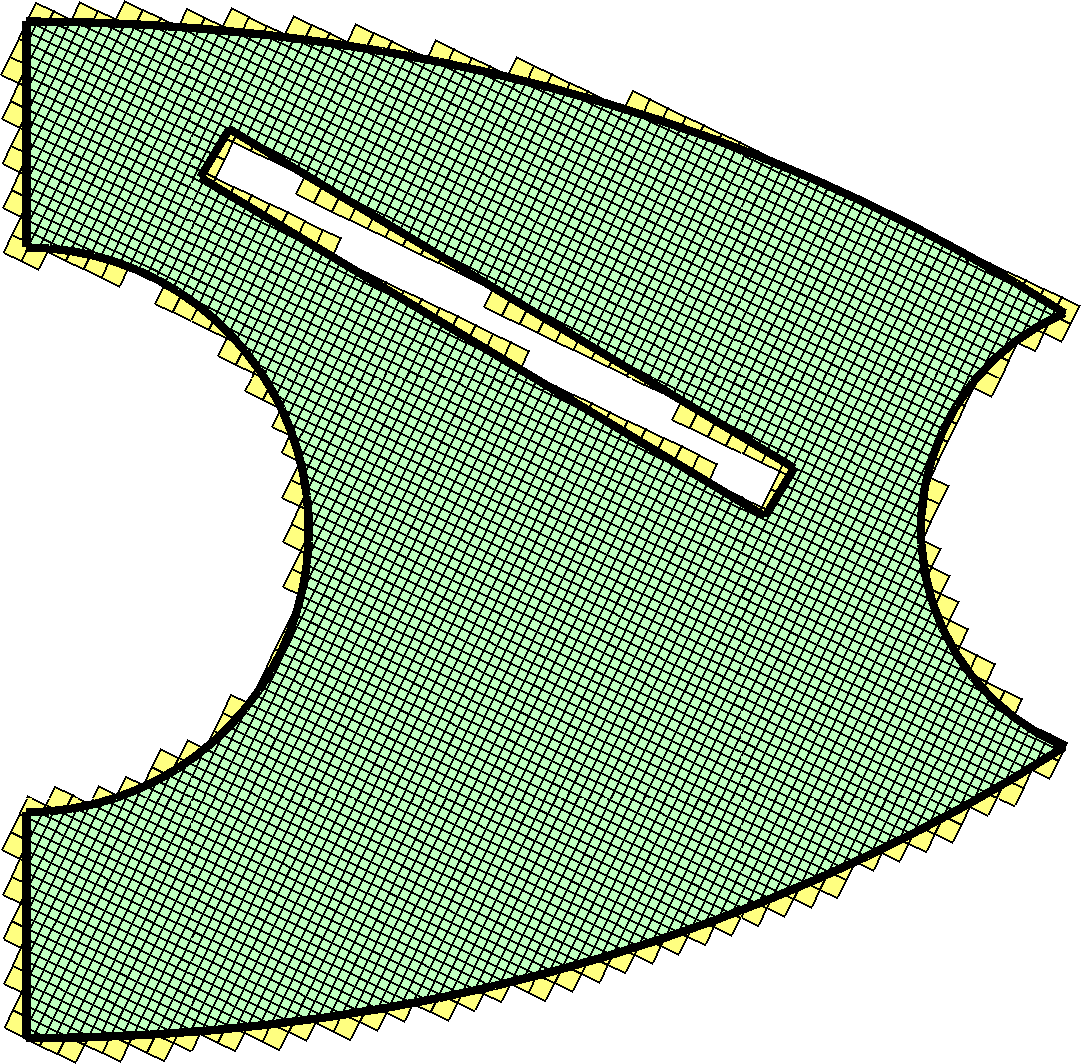}
\caption{Density resolution}
\label{fig:parametric-beam-density-mod}
\end{subfigure}

\begin{subfigure}[b]{\colRatioOne\textwidth}\centering
\includegraphics[width=\colRatioTwo\linewidth]{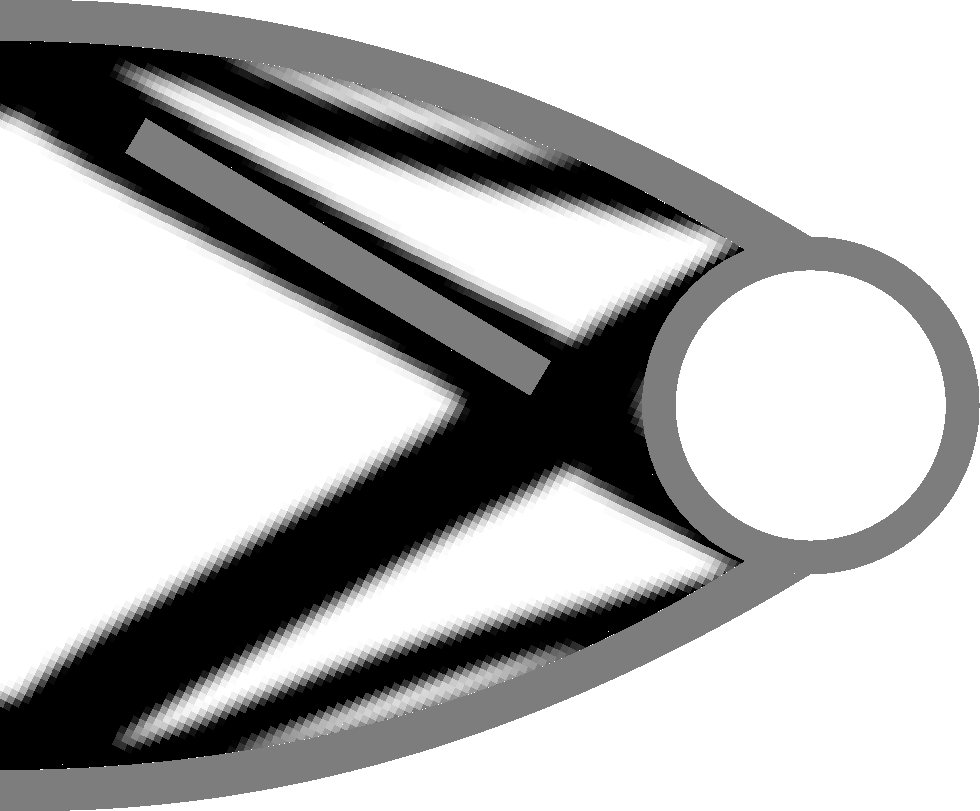}
\caption{Final geometry}
\label{fig:parametric-beam-final-mod}
\end{subfigure}
\begin{subfigure}[b]{\colRatioOne\textwidth}\centering
\includegraphics[width=\colRatioTwo\linewidth]{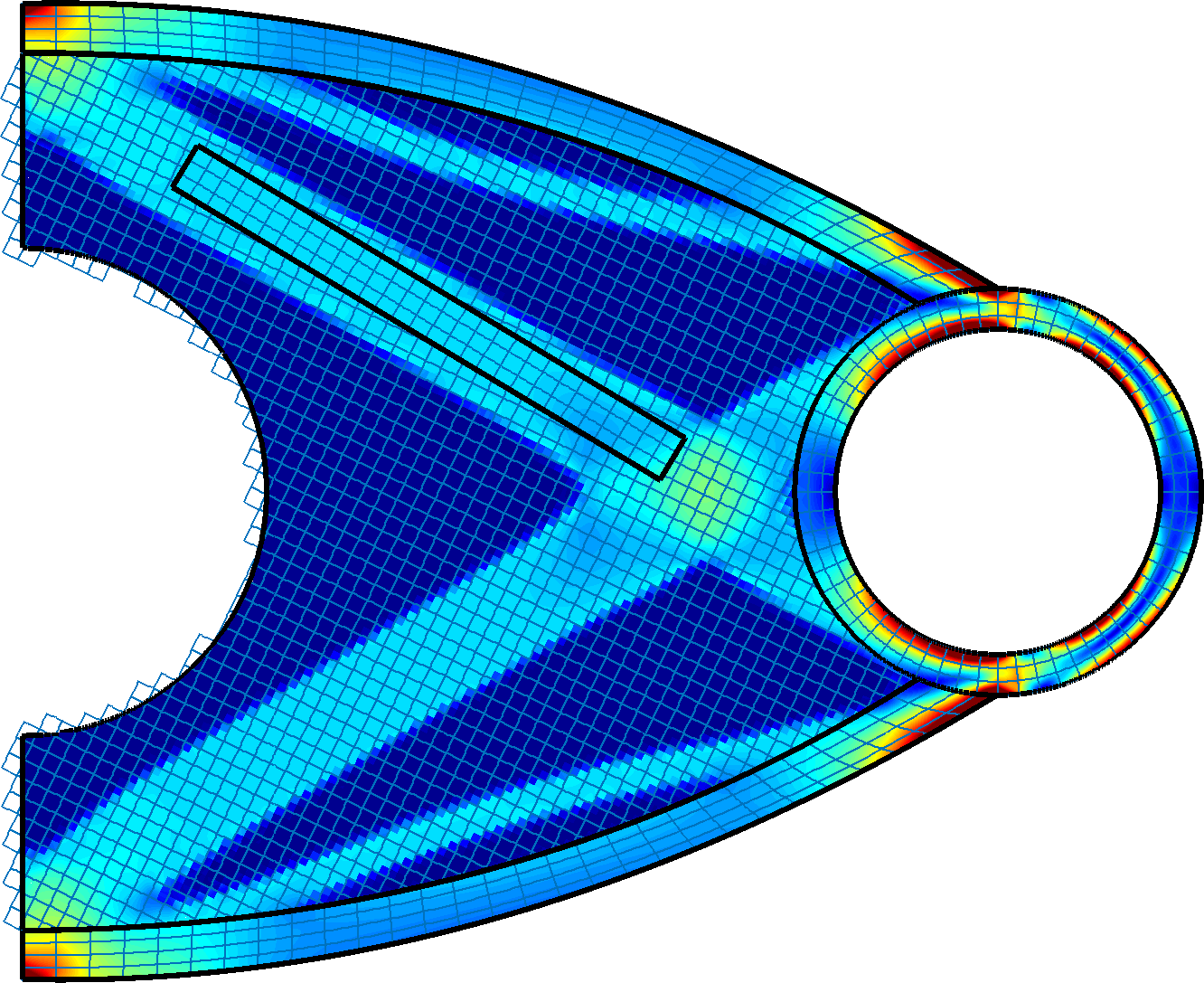}
\caption{Von-Mises stress in final geometry}
\label{fig:parametric-beam-stress-mod}
\end{subfigure}

\caption{
{Cantilever with parametric reinforcements (modified).}
Here a new nondesign domain region is placed inside the material part of the previous final geometry in Figure~\ref{fig:parametric-beam-final}.
}
\label{fig:parametric-beam-mod}
\end{figure}

\def\colRatioOne{1}
\def\colRatioTwo{0.6}
\begin{figure}\centering
\begin{subfigure}[b]{\colRatioOne\textwidth}\centering
\includegraphics[width=\colRatioTwo\linewidth]{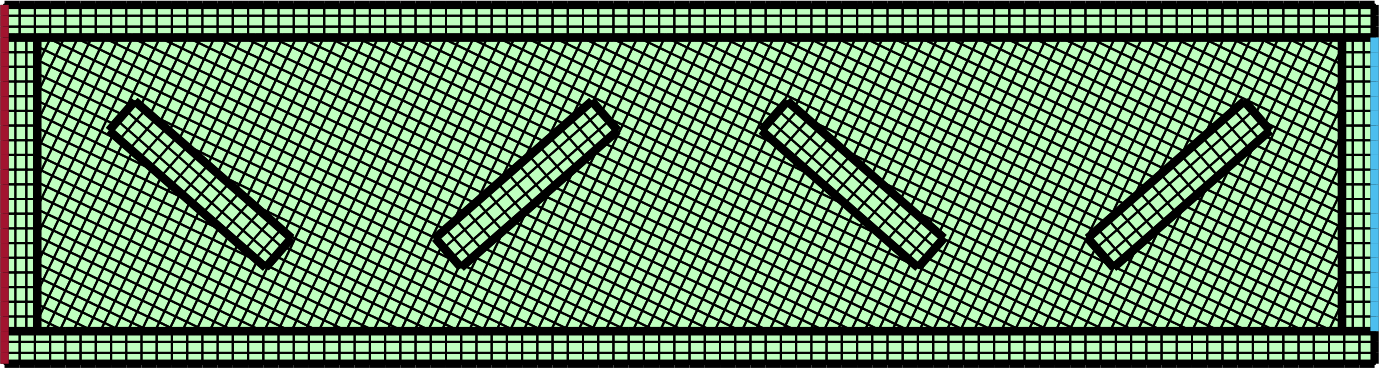}
\caption{Finite element meshes}
\label{fig:truss-structure-mesh}
\end{subfigure}
\begin{subfigure}[b]{\colRatioOne\textwidth}\centering
\includegraphics[width=\colRatioTwo\linewidth]{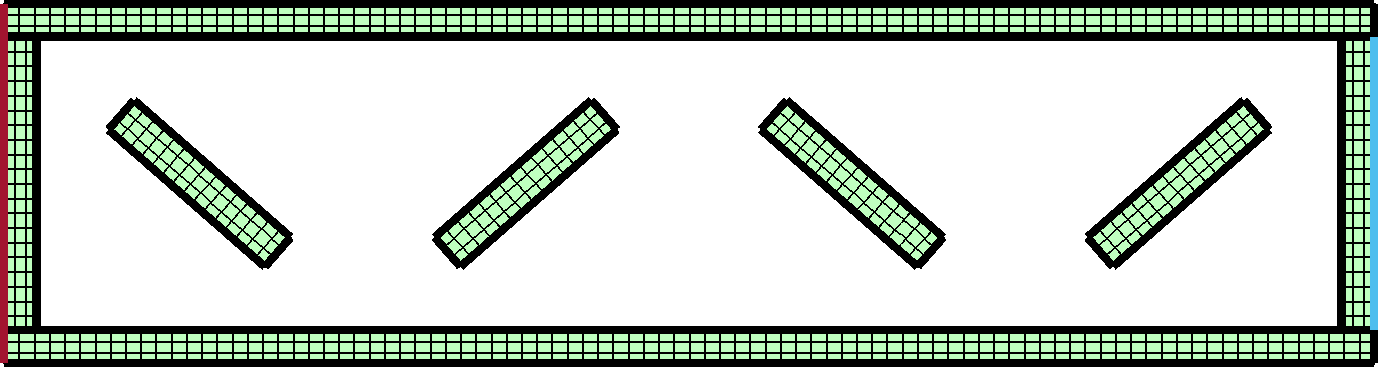}
\caption{Nondesign domain regions $\{\Omega_i\}_{i=1}^N$}
\label{fig:truss-structure-given}
\end{subfigure}
\begin{subfigure}[b]{\colRatioOne\textwidth}\centering
\includegraphics[width=\colRatioTwo\linewidth]{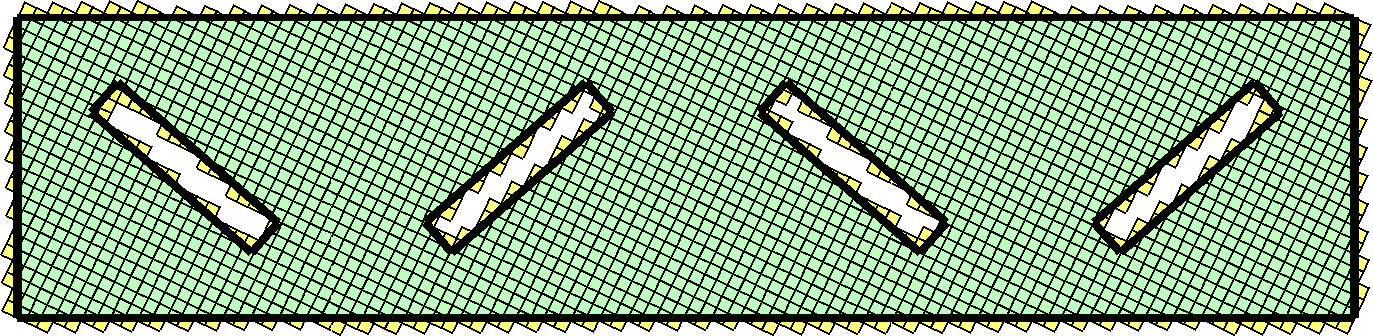}
\caption{Design domain $\Omega_0$}
\label{fig:truss-structure-design}
\end{subfigure}
\begin{subfigure}[b]{\colRatioOne\textwidth}\centering
\includegraphics[width=\colRatioTwo\linewidth]{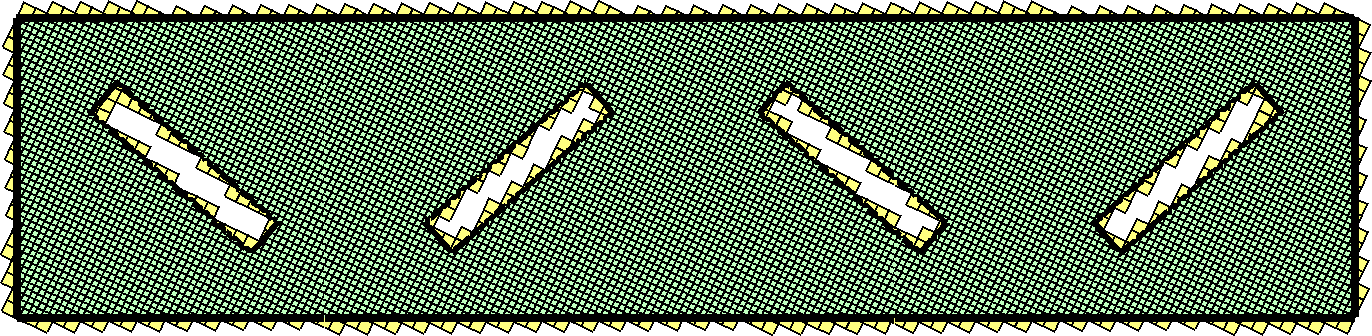}
\caption{Density resolution}
\label{fig:truss-structure-density}
\end{subfigure}
\begin{subfigure}[b]{\colRatioOne\textwidth}\centering
\includegraphics[width=\colRatioTwo\linewidth]{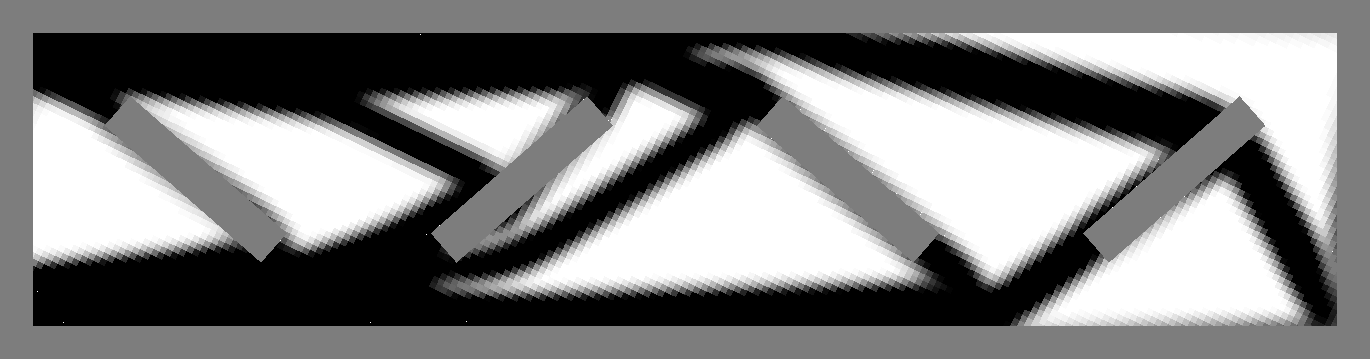}
\caption{Final geometry}
\label{fig:truss-structure-final}
\end{subfigure}
\begin{subfigure}[b]{\colRatioOne\textwidth}\centering
\includegraphics[width=\colRatioTwo\linewidth]{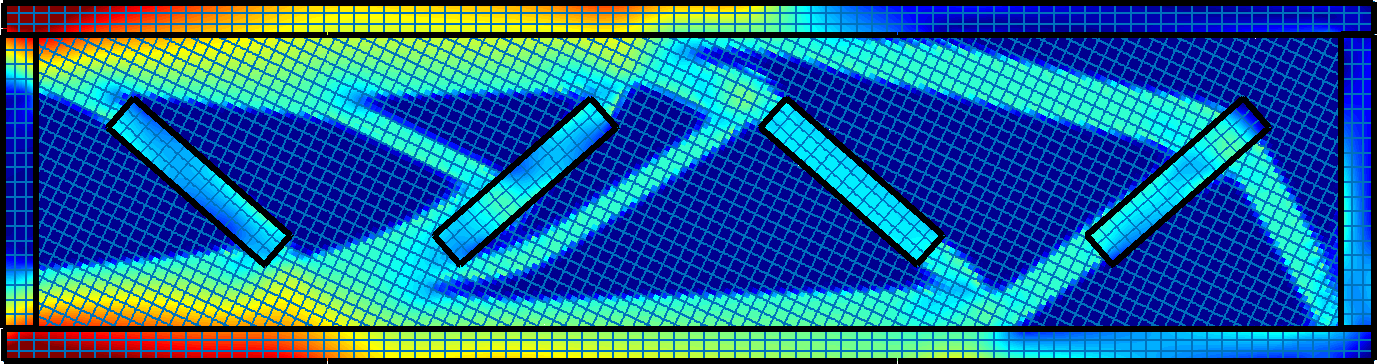}
\caption{Von-Mises stress in final geometry}
\label{fig:truss-structure-stress}
\end{subfigure}
\caption{
{Cantilever with truss structure reinforcements.}
Here the slanted nondesign domain regions in (b) internal to the outer frame are not positioned in an optimal way.
}
\label{fig:truss-structure}
\end{figure}

\section{Conclusions}
In this paper we have demonstrated the use of the cut finite element method in topology optimization.
The key feature of our approach is the \emph{flexible coupling to nondesign domain regions in a systematic and reliable manner}, which we have manifested using theoretical results and numerical examples.
To achieve this we have designed methods both for the imposition of Dirichlet boundary conditions on the design domain and for the coupling of the physical quantities between the design domain and nondesign domain regions that are robust with respect to the material density. The approach is based on a weighted Nitsche's method. To handle vanishing element cuts in a robust fashion we have proposed a ghost penalty stabilization, that may also be used to improve stability for vanishing material density in the design domain. The potential of the method was illustrated on three different computational examples. In this work we used tensor product B-splines for the discretization of the physical models, but other finite element methods can be applied in the same framework.

\bigskip
\bigskip
\noindent
\footnotesize {\bf Acknowledgments.}
This research was supported in part by the Swedish Foundation
for Strategic Research Grant No.\ AM13-0029 and the Swedish Research
Council Grants Nos.\  2013-4708, 2017-03911, 2018-05262. EB was supported by EPSRC research grants EP/P01576X/1 and EP/P012434/1.

\bibliographystyle{habbrv}
\footnotesize{
\bibliography{topopt}

\begin{thebibliography}{10}
\expandafter\ifx\csname url\endcsname\relax
  \def\url#1{\texttt{#1}}\fi
\expandafter\ifx\csname doi\endcsname\relax
  \def\doi#1{\burlalt{doi:#1}{http://dx.doi.org/#1}}\fi
\expandafter\ifx\csname urlprefix\endcsname\relax\def\urlprefix{URL }\fi
\expandafter\ifx\csname href\endcsname\relax
  \def\href#1#2{#2}\fi
\expandafter\ifx\csname burlalt\endcsname\relax
  \def\burlalt#1#2{\href{#2}{#1}}\fi

\bibitem{AJT04}
G.~Allaire, F.~Jouve, and A.-M. Toader.
\newblock Structural optimization using sensitivity analysis and a level-set
  method.
\newblock {\em J. Comput. Phys.}, 194(1):363 -- 393, 2004.
\newblock \doi{10.1016/j.jcp.2003.09.032}.

\bibitem{MR3047899}
C.~S. Andreasen and O.~Sigmund.
\newblock Topology optimization of fluid-structure-interaction problems in
  poroelasticity.
\newblock {\em Comput. Methods Appl. Mech. Engrg.}, 258:55--62, 2013.
\newblock \doi{10.1016/j.cma.2013.02.007}.

\bibitem{Andreassen2011}
E.~Andreassen, A.~Clausen, M.~Schevenels, B.~S. Lazarov, and O.~Sigmund.
\newblock {Efficient topology optimization in MATLAB using 88 lines of code}.
\newblock {\em Struct. Multidiscip. Optim.}, 43(1):1--16, 2011,
  \burlalt{9605103}{http://arxiv.org/abs/9605103}.
\newblock \doi{10.1007/s00158-010-0594-7}.

\bibitem{Bendsoe1989}
M.~P. Bends{\o}e.
\newblock {Optimal shape design as a material distribution problem}.
\newblock {\em Structural Optimization}, 1(4):193--202, 1989.
\newblock \doi{10.1007/BF01650949}.

\bibitem{Bendsoe95}
M.~P. Bends{\o}e.
\newblock {\em Optimization of structural topology, shape, and material}.
\newblock Springer-Verlag, Berlin, 1995.
\newblock \doi{10.1007/978-3-662-03115-5}.

\bibitem{Bendsoe1988}
M.~P. Bends{\o}e and N.~Kikuchi.
\newblock Generating optimal topologies in structural design using a
  homogenization method.
\newblock {\em Comput. Methods Appl. Mech. Engrg.}, 71(2):197--224, 1988.
\newblock \doi{10.1016/0045-7825(88)90086-2}.

\bibitem{MR2008524}
M.~P. Bends{\o}e and O.~Sigmund.
\newblock {\em Topology optimization: Theory, Methods, and Applications}.
\newblock Springer-Verlag, Berlin, 2004.
\newblock \doi{10.1007/978-3-662-05086-6}.

\bibitem{BerWadBer2018}
A.~Bernland, E.~Wadbro, and M.~Berggren.
\newblock Acoustic shape optimization using cut finite elements.
\newblock {\em Internat. J. Numer. Methods Engrg.}, 113(3):432--449, 2018.
\newblock \doi{10.1002/nme.5621}.

\bibitem{Bourdin2001}
B.~Bourdin.
\newblock Filters in topology optimization.
\newblock {\em Internat. J. Numer. Methods Engrg.}, 50(9):2143--2158, 2001.
\newblock \doi{10.1002/nme.116}.

\bibitem{Bruns2001}
T.~E. Bruns and D.~A. Tortorelli.
\newblock {Topology optimization of non-linear elastic structures and compliant
  mechanisms}.
\newblock {\em Comput. Methods Appl. Mech. Engrg.}, 190(26-27):3443--3459,
  2001.
\newblock \doi{10.1016/S0045-7825(00)00278-4}.

\bibitem{Burman2010}
E.~Burman.
\newblock Ghost penalty.
\newblock {\em C. R. Math. Acad. Sci. Paris}, 348(21-22):1217--1220, 2010.
\newblock \doi{10.1016/j.crma.2010.10.006}.

\bibitem{Burman2015}
E.~Burman, S.~Claus, P.~Hansbo, M.~G. Larson, and A.~Massing.
\newblock {CutFEM: Discretizing geometry and partial differential equations}.
\newblock {\em Internat. J. Numer. Methods Engrg.}, 104(7):472--501, 2015,
  \burlalt{1010.1724}{http://arxiv.org/abs/1010.1724}.
\newblock \doi{10.1002/nme.4823}.

\bibitem{Burman2017}
E.~Burman, D.~Elfverson, P.~Hansbo, M.~G. Larson, and K.~Larsson.
\newblock {A cut finite element method for the Bernoulli free boundary value
  problem}.
\newblock {\em Comput. Methods Appl. Mech. Engrg.}, 317:598--618, 2017.
\newblock \doi{10.1016/j.cma.2016.12.021}.

\bibitem{Burman2018}
E.~Burman, D.~Elfverson, P.~Hansbo, M.~G. Larson, and K.~Larsson.
\newblock {Shape optimization using the cut finite element method}.
\newblock {\em Comput. Methods Appl. Mech. Engrg.}, 328:242--261, 2018.
\newblock \doi{10.1016/j.cma.2017.09.005}.

\bibitem{Burman2012}
E.~Burman and P.~Hansbo.
\newblock Fictitious domain finite element methods using cut elements: {II}.
  {A} stabilized {N}itsche method.
\newblock {\em Appl. Numer. Math.}, 62(4):328--341, 2012.
\newblock \doi{10.1016/j.apnum.2011.01.008}.

\bibitem{MR3406629}
R.~E. Christiansen, B.~S. Lazarov, J.~S. Jensen, and O.~Sigmund.
\newblock Creating geometrically robust designs for highly sensitive problems
  using topology optimization.
\newblock {\em Struct. Multidiscip. Optim.}, 52(4):737--754, 2015.
\newblock \doi{10.1007/s00158-015-1265-5}.

\bibitem{MR3340167}
A.~Clausen, N.~Aage, and O.~Sigmund.
\newblock Topology optimization of coated structures and material interface
  problems.
\newblock {\em Comput. Methods Appl. Mech. Engrg.}, 290:524--541, 2015.
\newblock \doi{10.1016/j.cma.2015.02.011}.

\bibitem{MR3182450}
J.~D. Deaton and R.~V. Grandhi.
\newblock A survey of structural and multidisciplinary continuum topology
  optimization: post 2000.
\newblock {\em Struct. Multidiscip. Optim.}, 49(1):1--38, 2014.
\newblock \doi{10.1007/s00158-013-0956-z}.

\bibitem{ElfLarLar18}
D.~Elfverson, M.~G. Larson, and K.~Larsson.
\newblock Cut{IGA} with basis function removal.
\newblock {\em Adv. Model. Simul. Eng. Sci.}, 5(6):1--19, 2018.
\newblock \doi{10.1186/s40323-018-0099-2}.

\bibitem{2018arXiv180405654E}
D.~{Elfverson}, M.~G. {Larson}, and K.~{Larsson}.
\newblock A new least squares stabilized {N}itsche method for cut isogeometric
  analysis.
\newblock {\em Comput. Methods Appl. Mech. Engrg.}, 349:1--16, 2019.
\newblock \doi{10.1016/j.cma.2019.02.011}.

\bibitem{5617251}
A.~Erentok and O.~Sigmund.
\newblock Topology optimization of sub-wavelength antennas.
\newblock {\em IEEE T. Antenn. Propag.}, 59(1):58--69, Jan 2011.
\newblock \doi{10.1109/TAP.2010.2090451}.

\bibitem{MR3670454}
J.~P. Groen, M.~Langelaar, O.~Sigmund, and M.~Ruess.
\newblock Higher-order multi-resolution topology optimization using the finite
  cell method.
\newblock {\em Internat. J. Numer. Methods Engrg.}, 110(10):903--920, 2017.
\newblock \doi{10.1002/nme.5432}.

\bibitem{MR3620802}
L.~H\"{a}gg and E.~Wadbro.
\newblock Nonlinear filters in topology optimization: existence of solutions
  and efficient implementation for minimum compliance problems.
\newblock {\em Struct. Multidiscip. Optim.}, 55(3):1017--1028, 2017.
\newblock \doi{10.1007/s00158-016-1553-8}.

\bibitem{HaHa2002}
A.~Hansbo and P.~Hansbo.
\newblock An unfitted finite element method, based on {N}itsche's method, for
  elliptic interface problems.
\newblock {\em Comput. Methods Appl. Mech. Engrg.}, 191(47-48):5537--5552,
  2002.
\newblock \doi{10.1016/S0045-7825(02)00524-8}.

\bibitem{HaLaLa18}
P.~Hansbo, M.~G. Larson, and K.~Larsson.
\newblock Cut finite element methods for linear elasticity problems.
\newblock In {\em Geometrically unfitted finite element methods and
  applications}, volume 121 of {\em Lect. Notes Comput. Sci. Eng.}, pages
  25--63. Springer, Cham, 2017.
\newblock \doi{10.1007/978-3-319-71431-8_2}.

\bibitem{6750741}
E.~Hassan, E.~Wadbro, and M.~Berggren.
\newblock Topology optimization of metallic antennas.
\newblock {\em IEEE T. Antenn. Propag.}, 62(5):2488--2500, May 2014.
\newblock \doi{10.1109/TAP.2014.2309112}.

\bibitem{JonLarLar17}
T.~Jonsson, M.~G. Larson, and K.~Larsson.
\newblock Cut finite element methods for elliptic problems on multipatch
  parametric surfaces.
\newblock {\em Comput. Methods Appl. Mech. Engrg.}, 324:366--394, 2017.
\newblock \doi{10.1016/j.cma.2017.06.018}.

\bibitem{MR3038926}
A.~Klarbring and N.~Str{\"o}mberg.
\newblock Topology optimization of hyperelastic bodies including non-zero
  prescribed displacements.
\newblock {\em Struct. Multidiscip. Optim.}, 47(1):37--48, 2013.
\newblock \doi{10.1007/s00158-012-0819-z}.

\bibitem{MR2947650}
J.~Kook, K.~Koo, J.~Hyun, J.~S. Jensen, and S.~Wang.
\newblock Acoustical topology optimization for {Z}wicker's loudness
  model---application to noise barriers.
\newblock {\em Comput. Methods Appl. Mech. Engrg.}, 237/240:130--151, 2012.
\newblock \doi{10.1016/j.cma.2012.05.004}.

\bibitem{Li2001}
Q.~Li, G.~P. Steven, and Y.~M. Xie.
\newblock A simple checkerboard suppression algorithm for evolutionary
  structural optimization.
\newblock {\em Struct. Multidiscip. Optim.}, 22(3):230--239, 2001.
\newblock \doi{10.1007/s001580100140}.

\bibitem{MR3843862}
C.~Lundgaard, J.~Alexandersen, M.~Zhou, C.~S. Andreasen, and O.~Sigmund.
\newblock Revisiting density-based topology optimization for
  fluid-structure-interaction problems.
\newblock {\em Struct. Multidiscip. Optim.}, 58(3):969--995, 2018.
\newblock \doi{10.1007/s00158-018-1940-4}.

\bibitem{nitsche1971}
J.~Nitsche.
\newblock {\"U}ber ein {V}ariationsprinzip zur {L}{\"o}sung von
  {D}irichlet-{P}roblemen bei {V}erwendung von {T}eilr{\H a}umen, die keinen
  {R}andbedingungen unterworfen sind.
\newblock {\em Abh. Math. Sem. Univ. Hamburg}, 36:9--15, 1971.
\newblock \doi{10.1007/BF02995904}.

\bibitem{MR2878673}
J.~Parvizian, A.~D\"{u}ster, and E.~Rank.
\newblock Topology optimization using the finite cell method.
\newblock {\em Optim. Eng.}, 13(1):57--78, 2012.
\newblock \doi{10.1007/s11081-011-9159-x}.

\bibitem{Sigmund1997}
O.~Sigmund.
\newblock {On the design of compliant mechanisms using topology optimization}.
\newblock {\em Mech. Struct. Mach.}, 25(4):493--524, 1997.
\newblock \doi{10.1080/08905459708945415}.

\bibitem{Sigmund2007}
O.~Sigmund.
\newblock {Morphology-based black and white filters for topology optimization}.
\newblock {\em Struct. Multidiscip. Optim.}, 33(4-5):401--424, 2007.
\newblock \doi{10.1007/s00158-006-0087-x}.

\bibitem{Sigmund2013}
O.~Sigmund and K.~Maute.
\newblock Topology optimization approaches: A comparative review.
\newblock {\em Struct. Multidiscip. Optim.}, 48(6):1031--1055, 2013.
\newblock \doi{10.1007/s00158-013-0978-6}.

\bibitem{MR3646361}
C.~H. Villanueva and K.~Maute.
\newblock Cut{FEM} topology optimization of 3{D} laminar incompressible flow
  problems.
\newblock {\em Comput. Methods Appl. Mech. Engrg.}, 320:444--473, 2017.
\newblock \doi{10.1016/j.cma.2017.03.007}.

\bibitem{MR3240939}
E.~Wadbro.
\newblock Analysis and design of acoustic transition sections for impedance
  matching and mode conversion.
\newblock {\em Struct. Multidiscip. Optim.}, 50(3):395--408, 2014.
\newblock \doi{10.1007/s00158-014-1058-2}.

\bibitem{MR3395903}
E.~Wadbro and C.~Engstr\"{o}m.
\newblock Topology and shape optimization of plasmonic nano-antennas.
\newblock {\em Comput. Methods Appl. Mech. Engrg.}, 293:155--169, 2015.
\newblock \doi{10.1016/j.cma.2015.04.011}.

\bibitem{doi:10.1002/nme.2777}
G.~H. Yoon.
\newblock Topology optimization for stationary fluid–structure interaction
  problems using a new monolithic formulation.
\newblock {\em Internat. J. Numer. Methods Engrg.}, 82(5):591--616, 2010.
\newblock \doi{10.1002/nme.2777}.

\bibitem{Zhou1991}
M.~Zhou and G.~I.~N. Rozvany.
\newblock {The COC algorithm, Part II: Topological, geometrical and generalized
  shape optimization}.
\newblock {\em Comput. Methods Appl. Mech. Engrg.}, 89(1-3):309--336, 1991.
\newblock \doi{10.1016/0045-7825(91)90046-9}.

\end{thebibliography}
}

\bigskip
\bigskip
\noindent
\footnotesize {\bf Authors' addresses:}

\smallskip
\noindent
Erik Burman,  \quad \hfill \addressuclshort\\
{\tt e.burman@ucl.ac.uk}

\smallskip
\noindent
Daniel Elfverson,  \quad \hfill \addressumushort\\
{\tt daniel.elfverson@umu.se}

\smallskip
\noindent
Peter Hansbo,  \quad \hfill \addressjushort\\
{\tt peter.hansbo@ju.se}

\smallskip
\noindent
Mats G. Larson,  \quad \hfill \addressumushort\\
{\tt mats.larson@umu.se}

\smallskip
\noindent
Karl Larsson, \quad \hfill \addressumushort\\
{\tt karl.larsson@umu.se}

\end{document}